\newtheorem{theorem}{Theorem}
\newtheorem{lemma}[theorem]{Lemma}
\newtheorem{proposition}[theorem]{Proposition}
\newtheorem{corollary}[theorem]{Corollary}
\newtheorem{example}{Example}
\newtheorem{remark}{Remark}
\newcommand{\BEAS}{\begin{eqnarray*}}
\newcommand{\EEAS}{\end{eqnarray*}}
\newcommand{\BEA}{\begin{eqnarray}}
\newcommand{\EEA}{\end{eqnarray}}
\newcommand{\BEQ}{\begin{equation}}
\newcommand{\EEQ}{\end{equation}}
\newcommand{\BIT}{\begin{itemize}}
\newcommand{\EIT}{\end{itemize}}
\newcommand{\BNUM}{\begin{enumerate}}
\newcommand{\ENUM}{\end{enumerate}}
\newcommand{\Tr}[1]{\mathop{\bf Tr}\left({#1}\right)}
\newcommand{\rank}[1]{\text{Rank}\left({#1}\right)}
\begin{document}

\title{$H_{\infty}$ Analysis Revisited}
%
%
%

\author{Seungil~You,~\IEEEmembership{Student member,~IEEE,}
        and~Ather~Gattami,~\IEEEmembership{Member,~IEEE}
\thanks{S. You is with the Control and Dynamical Systems, California Institute of Technology, Pasadena, CA, 91125 USA e-mail: syou@caltech.edu.}
\thanks{A. Gattami is with Ericsson Research, Stockholm, Sweden, e-mail: ather.gattami@ericsson.com.}
\thanks{Manuscript received XX XX, 20XX; revised XX XX, 20XX.}}

\maketitle

\begin{abstract}
This paper proposes a direct, and simple approach to the $H_{\infty}$ norm calculation in more general settings.
In contrast to the method based on the Kalman--Yakubovich--Popov lemma, our approach does not require a controllability assumption, and returns a sinusoidal input that achieves the $H_{\infty}$ norm of the system including its frequency.
In addition, using a semidefinite programming duality, we present a new proof of the Kalman--Yakubovich--Popov lemma, and make a connection between strong duality and controllability.
Finally, we generalize our approach towards the generalized Kalman--Yakubovich--Popov lemma, which considers input signals within a finite spectrum.
\end{abstract}


\IEEEpeerreviewmaketitle

\section*{Notation}
 $W^*$ is the Hermitian of $W$, $\Tr{W}$ is the trace of $W$, and $W^{\dagger}$ is the pseudoinverse of $W$. The generalized inequality, $X \succeq\, 0$, means $X$ is a positive semidefinite matrix. For positive semidefinite matrix $X$, $X^{1/2}$ is a matrix sqaure root such that $X = X^{1/2}\left(X^{1/2}\right)^*$. The matrix $I_m$ denotes the $m \times m$ identity matrix, and $0_{m,n}$ is the $m \times n$ zero matrix.
 
\section{Introduction}
We revisit the problem of computing the $H_{\infty}$ norm of a Linear Time Invariant (LTI) system.
This problem is well studied in the literature, and can be found in standard textbooks in control, \cite{boyd1994linear}, \cite{zhou1996robust}, and \cite{Dullerud:2010tc} to name a few.
Moreover, efficient methods for calculating $H_{\infty}$ norm are also investigated in \cite{boyd1990regularity}, \cite{bruinsma1990fast}, and \cite{scherer1990h}.

All these standard results are based on the Kalman--Yakubovich--Popov (KYP) lemma \cite{Rantzer:2011wn}.
Recently, \cite{gattamisimple} presented a new, simple approach to the $H_{\infty}$ analysis based on a covariance-like formulation, and 
\cite{syoulagrangian} uses Lagrangian duality to propose a new proof of the KYP lemma.

This paper combines these new insights from \cite{gattamisimple} and \cite{syoulagrangian} to reformulate the $H_{\infty}$ analysis problem.
Specifically, we present a semidefinite program that computes the $H_{\infty}$ norm of the LTI system, which is the same as the one in \cite{gattamisimple}.
However, we show its exactness based on \cite{syoulagrangian}.

This semidefinite program is known to the community as a "dual" of the KYP lemma \cite{balakrishnan2003semidefinite}, but in contrast to the KYP lemma, an optimal solution of our semidefinite program can generate a sinusoidal input that acheives the $H_{\infty}$ norm of the system, and we present an algorithm for this task.

In addition, we show that controllability implies strong duality.
Therefore, without controllability, the Lyapunov matrix $P$ which satisfies the Linear Matrix Inequality (LMI) from the KYP lemma may not exist, and this makes it hard to compute the $H_{\infty}$ norm using the KYP lemma. However, our semidefinite program is always exact regardless of controllability of the system.
In this sense, this approach is more direct and general compared to the KYP lemma.

Finally, we generalize our idea towards the generalized KYP lemma \cite{Iwasaki:kx}, where one can compute the $H_{\infty}$ norm for input signals restricted to a finite frequency range. 

\section{$H_{\infty}$ analysis for discrete time LTI systems}
\subsection{Problem formulation}
Consider the LTI system $\mathcal{M}$ given by
\BEAS
{x}_{k+1} &=& A{x}_{k}+B{w}_{k}\\
{z}_{k} &=& C{x}_{k} + D{w}_{k},
\EEAS
$A \in \mathbb{C}^{n \times n}$, $B \in \mathbb{C}^{n \times m}$, $C \in \mathbb{C}^{l \times n}$, and $D \in \mathbb{C}^{l \times m}$. 
Moreover we assume that $A$ is Schur stable, that is, the spectral radius of $A$ is less than unity.
Although the $H_{\infty}$ analysis considers the $\mathcal{L}_2$ gain of the system, it can be shown that this is equivalent to considering the power norm \cite{doyle1994mixed}.
Therefore for the ease of presentation, we proceed the analysis with the power norm
\BEAS
\|\mathbf{h}\|_p^2 = \lim_{N \rightarrow \infty} \frac{1}{N}\sum_{k=0}^{N-1} {h}_k^*{h}_k.
\EEAS
We define the $H_{\infty}$ norm of the system $\mathcal{M}$ as
\BEAS
\|\mathcal{M}\|_{\infty} = \underset{\|\mathbf{w}\|_p \leq 1}{\sup} \frac{\|\mathbf{z}\|_p}{\|\mathbf{w}\|_p}.
\EEAS
From the linearity of the system, the supremum is always achieved at $\|\mathbf{w}\|_p = 1$. Therefore, for LTI systems, the above definition is equivalent to
\BEAS
\|\mathcal{M}\|_{\infty} = \sup_{\|\mathbf{w}\|_p \leq 1} \|\mathbf{z}\|_p.
\EEAS

From the above definition, we have the following infinite dimensional optimization problem:

\begin{align}
\hspace{-20mm}\|\mathcal{M}\|_{\infty}^2 = ~ \underset{\mathbf{w},\mathbf{x}} {\text{max}}
~~ &\lim_{N \rightarrow \infty} \frac{1}{N}\sum_{k=0}^{N-1} {z}_k^*{z}_k\label{eq:dopt1}\\
 \text{s. t. }~~
 &{x}_{k+1} = A{x}_{k}+B{w}_{k}\label{eq:doptcs}\\
 &{z}_{k} = C{x}_{k} + D{w}_{k}\\
 &{x}_0 = 0\\
&\|\mathbf{w}\|_p \leq 1.\label{eq:doptce}
\end{align}
Notice that this problem as posed is intractable since we have countably infinite number of variables and constraints.

\subsection{Main Result}
Define the new variable $$V = \underset{n \rightarrow \infty}{\lim} \frac{1}{N}\sum_{k=0}^{N-1} \begin{bmatrix} {x}_{k} \\ {w}_k\end{bmatrix}\begin{bmatrix} {x}_{k} \\ {w}_k\end{bmatrix}^* \succeq 0.$$

Then, the objective function will only depend on this new matrix $V$:
\begin{equation}
\lim_{N \rightarrow \infty} \frac{1}{N}\sum_{k=0}^{N-1} {z}_k^*{z}_k = \Tr{\begin{bmatrix} C & D \end{bmatrix} V \begin{bmatrix} C & D \end{bmatrix}^*}.\label{eq:cov}
\end{equation}
From the dynamics ${x}_{k+1} = A{x}_{k}+B{w}_{k}$, we have $x_{k+1}x_{k+1}^* = (Ax_k+Bw_k)(Ax_k+Bw_k)^*$. By taking the infinite sum on both sides, we conclude that
\BEAS
\begin{bmatrix} I_n & 0_{n, m} \end{bmatrix} V \begin{bmatrix} I_n \\ 0_{m, n} \end{bmatrix} = \begin{bmatrix} A & B \end{bmatrix} V \begin{bmatrix} A^* \\ B^* \end{bmatrix},
\EEAS
and from $\|\mathbf{w}\|_p \leq 1$, we get $$\Tr{\begin{bmatrix} 0_{m, n} & I_m \end{bmatrix} V \begin{bmatrix} 0_{m, n}  & I_m \end{bmatrix}^*} \leq 1.$$
Notice that adding this redundant constraint to \eqref{eq:dopt1} does not change the optimal value and the solution of the problem:
\begin{align}
\underset{V,\mathbf{x},\mathbf{w}} {\text{max}}~~
& \Tr{\begin{bmatrix} C & D \end{bmatrix} V \begin{bmatrix} C & D \end{bmatrix}^*}\label{eq:dopt2}\\
 \text{s. t.}~~
 & \eqref{eq:doptcs} - \eqref{eq:doptce}\nonumber \\
 &\begin{bmatrix} I_n & 0_{n, m} \end{bmatrix} V \begin{bmatrix} I_n \\ 0_{m, n} \end{bmatrix} = \begin{bmatrix} A & B \end{bmatrix} V \begin{bmatrix} A^* \\ B^* \end{bmatrix}\label{eq:dopt2cs}\\
& \Tr{\begin{bmatrix} 0_{m, n} & I_m \end{bmatrix} V \begin{bmatrix} 0_{m, n}  & I_m \end{bmatrix}^*} \leq 1\\
& V \succeq 0\label{eq:dopt2ce}\\
& V = \underset{N \rightarrow \infty}{\lim} \frac{1}{N}\sum_{k=0}^{N-1} \begin{bmatrix} {x}_{k} \\ {w}_k\end{bmatrix}\begin{bmatrix} {x}_{k} \\ {w}_k\end{bmatrix}^*.
\end{align}
This is still problematic. First of all, \eqref{eq:dopt2} is still an infinite dimensional problem, and the last equality is not affine. Therefore this lifted problem \eqref{eq:dopt2} is an infinite-dimensional non-convex problem.
However, by dropping  \eqref{eq:doptcs} - \eqref{eq:doptce}, and the last equality, we have a relaxed version of $\eqref{eq:dopt2}$ which is a finite dimensional semidefinite program (SDP) \cite{boyd2004convex}:
\begin{align}
\mu_{\text{opt}}:=
\underset{V} {\text{max}}~~
& \Tr{\begin{bmatrix} C & D \end{bmatrix} V \begin{bmatrix} C & D \end{bmatrix}^*}\label{eq:dsdp}\\
\text{s. t.}~~ &\eqref{eq:dopt2cs} - \eqref{eq:dopt2ce}.\nonumber
\end{align}

One direct consequence of this relaxation is that the optimal value of \eqref{eq:dopt2} is less than $\mu_{\text{opt}}$, because \eqref{eq:dsdp} has a larger feasible set.
This gives us $\mu_{\text{opt}} \geq \|\mathcal{M}\|_{\infty}^2$.

However the following non-trivial result shows that $\mu_{\text{opt}}$ is actually the same as $\|\mathcal{M}\|_{\infty}^2$ and we can recover the optimal solution of \eqref{eq:dopt1} from the optimal solution $V_{\text{opt}}$ of \eqref{eq:dsdp}.

\begin{theorem}
The optimal value of \eqref{eq:dsdp}, $\mu_{\text{opt}}$, is equal to the optimal value of \eqref{eq:dopt1}, $\|\mathcal{M}\|_{\infty}^2$.
\label{thm:eq}
\end{theorem}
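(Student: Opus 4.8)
The plan is to prove the reverse inequality $\mu_{\text{opt}} \le \|\mathcal{M}\|_{\infty}^2$, since the excerpt already establishes $\mu_{\text{opt}} \ge \|\mathcal{M}\|_{\infty}^2$ by the relaxation argument. The guiding idea: even though \eqref{eq:dsdp} drops the trajectory constraints and the non-affine definition of $V$, an optimal $V$ of \eqref{eq:dsdp} is in fact generated by a \emph{finite} combination of sinusoidal inputs, so its objective value is attainable in \eqref{eq:dopt1}.

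First I would note that \eqref{eq:dsdp} attains its optimum (the feasible set is nonempty, containing $V=0$, and, since $A$ is Schur, \eqref{eq:dopt2cs} together with the trace bound makes it compact). Let $V_{\text{opt}}$ be an optimal point, write $V_{\text{opt}} = \begin{bmatrix} X & Y \\ Y^* & Z \end{bmatrix}$ in block form, and factor $V_{\text{opt}} = \begin{bmatrix}\Phi\\\Psi\end{bmatrix}\begin{bmatrix}\Phi\\\Psi\end{bmatrix}^*$ with $\Phi \in \mathbb{C}^{n\times p}$, $\Psi \in \mathbb{C}^{m\times p}$, $p=\rank{V_{\text{opt}}}$. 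Then \eqref{eq:dopt2cs} reads $\Phi\Phi^* = (A\Phi + B\Psi)(A\Phi+B\Psi)^*$. The crux is to extract from this equality of Gram matrices a \emph{unitary} $U \in \mathbb{C}^{p\times p}$ with $A\Phi + B\Psi = \Phi U$: the equality forces $\Range(A\Phi+B\Psi) = \Range(\Phi\Phi^*) = \Range(\Phi)$, so $A\Phi + B\Psi = \Phi W$ for some $W$, and matching singular values (via a compact SVD of $\Phi$) shows $W$ may be modified on directions in $\Range(\Phi)^{\perp}$-sense into a genuine unitary $U$ without altering the product $\Phi U$. I expect this rank-deficiency bookkeeping to be the main obstacle; the rest is routine.

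Next, diagonalize the unitary as $U = \sum_{i=1}^{p} e^{j\theta_i} q_i q_i^*$ with $\{q_i\}$ an orthonormal eigenbasis of $\mathbb{C}^p$, and set $\xi_i = \Phi q_i$, $\eta_i = \Psi q_i$. Applying $U$ to $q_i$ gives $A\xi_i + B\eta_i = e^{j\theta_i}\xi_i$, hence $\xi_i = (e^{j\theta_i}I - A)^{-1}B\eta_i$ (using that $A$ is Schur, so $e^{j\theta_i}I-A$ is invertible), and $\sum_i q_i q_i^* = I_p$ yields the rank-one decomposition $V_{\text{opt}} = \sum_{i=1}^{p} \begin{bmatrix}\xi_i\\\eta_i\end{bmatrix}\begin{bmatrix}\xi_i\\\eta_i\end{bmatrix}^*$. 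Consequently, with the transfer function $G(z) := C(zI-A)^{-1}B + D$, the objective becomes $\mu_{\text{opt}} = \sum_{i=1}^{p} \|C\xi_i + D\eta_i\|^2 = \sum_{i=1}^{p} \|G(e^{j\theta_i})\eta_i\|^2$.

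Finally I would bound each mode by feeding the system the pure sinusoid $w_k = \eta_i e^{j\theta_i k}$: since $A$ is Schur the transient from $x_0 = 0$ decays exponentially and does not affect the power norm, the steady-state output is $G(e^{j\theta_i})\eta_i e^{j\theta_i k}$, so this input is admissible in \eqref{eq:dopt1} with $\|\mathbf{w}\|_p^2 = \|\eta_i\|^2$ and $\|\mathbf{z}\|_p^2 = \|G(e^{j\theta_i})\eta_i\|^2$; by the ratio characterization of $\|\mathcal{M}\|_{\infty}$ this gives $\|G(e^{j\theta_i})\eta_i\|^2 \le \|\mathcal{M}\|_{\infty}^2 \|\eta_i\|^2$. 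Summing over $i$ and using the trace constraint, $\mu_{\text{opt}} = \sum_i \|G(e^{j\theta_i})\eta_i\|^2 \le \|\mathcal{M}\|_{\infty}^2 \sum_i \|\eta_i\|^2 = \|\mathcal{M}\|_{\infty}^2 \, \Tr{Z} \le \|\mathcal{M}\|_{\infty}^2$, which combined with the relaxation bound proves equality. As a byproduct, the sinusoids $\eta_i e^{j\theta_i k}$ (suitably weighted according to $\Tr{\eta_i\eta_i^*}$) reconstruct an optimizer of \eqref{eq:dopt1}, so the same argument delivers the promised recovery of the worst-case input.
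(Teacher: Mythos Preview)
Your argument is correct and follows essentially the same route as the paper: factor $V_{\text{opt}}$, invoke the fact that $FF^*=GG^*$ yields a unitary intertwiner (Lemma~\ref{lemma:ra} in the paper), diagonalize that unitary to obtain a rank-one decomposition into sinusoidal modes $(\xi_i,\eta_i,\theta_i)$, and conclude by feeding each mode back into \eqref{eq:dopt1}. The only cosmetic difference is the last step: the paper singles out the index maximizing $\mu_k/p_k$ to exhibit a \emph{single} rank-one optimal $V$ and hence one sinusoid achieving $\mu_{\text{opt}}$, whereas you sum the per-mode bound $\|G(e^{j\theta_i})\eta_i\|^2\le\|\mathcal{M}\|_\infty^2\|\eta_i\|^2$ over all $i$; both finish the inequality, but the paper's extraction is what actually delivers the promised worst-case input (your ``suitably weighted'' combination would in general not be a single sinusoid).
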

Notice that, unlike \cite{Rantzer:2011wn}, or \cite{gattamisimple}, \eqref{eq:dsdp} does not require the controllability to compute $H_{\infty}$ norm.
Moreover,  we will describe how to construct the optimal solution of \eqref{eq:dopt1}, which is the problem of interest, from the optimal solution of \eqref{eq:dsdp}, and this solution constructs the sinusoidal input $\mathbf{w}$ that achieves $\|\mathcal{M}\|_{\infty}^2$.
Before proving this result, we need some technical lemmas.

\begin{lemma}[Rank one decomposition]
Suppose that $V\succeq 0$, and $\begin{bmatrix} I_n & 0_{n,m} \end{bmatrix} V \begin{bmatrix} I_n\\ 0_{m,n} \end{bmatrix} = \begin{bmatrix} A& B \end{bmatrix} V \begin{bmatrix} A^*\\ B^* \end{bmatrix}$. Then there exists a set of rank one matrices $\{V_k\}$ such that
\BEAS
V = \sum_k V_k, ~\textup{Rank}(V_k) = 1, ~V_k \succeq 0,\\
\begin{bmatrix} I_n & 0_{n,m} \end{bmatrix} V_k \begin{bmatrix} I_n\\ 0_{m,n} \end{bmatrix} = \begin{bmatrix} A& B \end{bmatrix} V_k \begin{bmatrix} A^*\\ B^* \end{bmatrix}
\EEAS
\label{lemma:rank1-1}
\end{lemma}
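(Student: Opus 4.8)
The plan is to reduce the lemma to the standard ``unitary freedom'' in symmetric factorizations of positive semidefinite matrices. Write $\hat A=\begin{bmatrix} A & B\end{bmatrix}$ and $\hat C=\begin{bmatrix} I_n & 0_{n,m}\end{bmatrix}$, so the hypothesis is exactly $\hat A V\hat A^{*}=\hat C V\hat C^{*}$, and the conclusion asks for a decomposition $V=\sum_k \xi_k\xi_k^{*}$ in which each rank-one term $V_k=\xi_k\xi_k^{*}$ again obeys $\hat A V_k\hat A^{*}=\hat C V_k\hat C^{*}$. First I would fix a full-column-rank factor $M\in\mathbb{C}^{(n+m)\times r}$, $r=\textup{Rank}(V)$, with $V=MM^{*}$ (for instance the nonzero part of a spectral decomposition of $V$). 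Substituting $V=MM^{*}$ into the hypothesis turns it into the statement that the two $n\times r$ factors $\hat A M$ and $\hat C M$ of the same Hermitian matrix coincide: $(\hat A M)(\hat A M)^{*}=(\hat C M)(\hat C M)^{*}$.

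The heart of the argument is the claim that there is a unitary $W\in\mathbb{C}^{r\times r}$ with $\hat C M=(\hat A M)W$. I would obtain it as follows: from $(\hat A M)(\hat A M)^{*}=(\hat C M)(\hat C M)^{*}$ we get $\|(\hat A M)^{*}v\|=\|(\hat C M)^{*}v\|$ for every $v\in\mathbb{C}^{n}$, so the assignment $(\hat A M)^{*}v\mapsto(\hat C M)^{*}v$ is a well-defined linear isometry from $\textup{range}\big((\hat A M)^{*}\big)$ onto $\textup{range}\big((\hat C M)^{*}\big)$; these are subspaces of $\mathbb{C}^{r}$ of equal dimension (both equal $\textup{Rank}(\hat A M)=\textup{Rank}(\hat C M)$), so the isometry extends to a unitary on all of $\mathbb{C}^{r}$, and taking adjoints yields the matrix $W$. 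I would then diagonalize the unitary as $W=\sum_{l=1}^{r} e^{j\phi_l} w_l w_l^{*}$ with $\{w_l\}$ an orthonormal basis of $\mathbb{C}^{r}$ and $\phi_l\in\mathbb{R}$, and set $\xi_l:=M w_l$, $V_l:=\xi_l\xi_l^{*}$.

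Finally I would check that $\{V_l\}_{l=1}^{r}$ does the job, which is routine. Each $\xi_l\neq 0$ since $M$ has full column rank, so each $V_l$ is positive semidefinite of rank one; orthonormality of $\{w_l\}$ gives $\sum_l V_l=M\big(\sum_l w_l w_l^{*}\big)M^{*}=MM^{*}=V$; and $\hat C\xi_l=\hat C M w_l=(\hat A M)W w_l=e^{j\phi_l}\hat A M w_l=e^{j\phi_l}\hat A\xi_l$, so $\hat A V_l\hat A^{*}=(\hat A\xi_l)(\hat A\xi_l)^{*}=(\hat C\xi_l)(\hat C\xi_l)^{*}=\hat C V_l\hat C^{*}$, which is precisely $\begin{bmatrix} I_n & 0_{n,m}\end{bmatrix}V_l\begin{bmatrix} I_n\\ 0_{m,n}\end{bmatrix}=\begin{bmatrix} A & B\end{bmatrix}V_l\begin{bmatrix} A^{*}\\ B^{*}\end{bmatrix}$. (Note the decomposition is finite, with at most $n+m$ terms.) The one genuinely nontrivial step is the middle one — extracting the unitary $W$ relating the factors $\hat A M$ and $\hat C M$; everything else is bookkeeping. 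I would also remark that this argument uses neither Schur stability of $A$ nor the particular block form of $\hat C$, which should make it reusable for the generalized-KYP variant discussed later in the paper.
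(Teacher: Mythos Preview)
Your proof is correct and follows essentially the same route as the paper: factor $V$, produce a unitary intertwiner between the two resulting $n\times r$ factors (the paper uses $V^{1/2}$ in place of your full-column-rank $M$ and delegates existence of the unitary to Rantzer's lemma in the appendix, whereas you supply the isometry-extension argument inline), diagonalize that unitary, and read off the rank-one pieces from its eigenvectors. Your use of a full-column-rank factor $M$ is a minor improvement over the paper's $V^{1/2}$, since it guarantees each $\xi_l\neq 0$ and hence $\textup{Rank}(V_l)=1$ without having to discard zero terms.
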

\begin{proof}
The following proof is from \cite{Rantzer:2011wn}. 
Let $F = \begin{bmatrix} I_n & 0_{n,m} \end{bmatrix} V^{1/2}$, and $G = \begin{bmatrix} A & B \end{bmatrix} V^{1/2}$.
Since $FF^* = GG^*$, Lemma \ref{lemma:ra} in the appendix implies that there exists a unitary matrix $U$ such that $F = GU$. 
Being unitary, $U = \sum_k e^{j\theta_k}u_ku_k^*$, and $\sum_k u_ku_k^* = I$. Notice that $Fu_k = GUu_k = e^{j\theta_k}Gu_k$, and $Fu_ku_k^*F^* = Gu_ku_k^*G^*$. 
Therefore, by defining $V_k = V^{1/2}u_k\left(V^{1/2}u_k\right)^*$, it is routine to check that this construction satisfies all the constraints.
\end{proof}

The above lemma shows that the extreme points of the feasible set of \eqref{eq:dsdp} are rank one matrices.
Since the objective function in \eqref{eq:dsdp}, $\Tr{\begin{bmatrix} C & D \end{bmatrix} V \begin{bmatrix} C & D \end{bmatrix}^*}$, is {\it{linear}} in $V$, there exists a rank one optimal solution of \eqref{eq:dsdp}.
\begin{proposition}[Rank one optimal solution]
There exists an optimal solution $V_{\text{opt}}$ of \eqref{eq:dsdp} with $\textup{Rank}({V_{\text{opt}}}) = 1$.
\label{prop:rankone}
\end{proposition}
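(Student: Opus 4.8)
The plan is to combine the rank-one decomposition of Lemma~\ref{lemma:rank1-1} with the linearity of the objective and the fact that the feasible set of \eqref{eq:dsdp} is, modulo normalization, a compact convex set on which a linear functional attains its maximum at an extreme point. First I would note that a maximizer $V^\star$ of \eqref{eq:dsdp} exists: the feasible set is closed, and the trace constraint $\Tr{\begin{bmatrix} 0_{m,n} & I_m \end{bmatrix} V \begin{bmatrix} 0_{m,n} & I_m \end{bmatrix}^*} \leq 1$ together with $V\succeq 0$ and the Schur stability of $A$ (which, via \eqref{eq:dopt2cs}, controls the $x$-block of $V$ in terms of the $w$-block) bounds the whole matrix, so the set is compact and $\mu_{\text{opt}}$ is achieved at some $V^\star$.

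Next I would apply Lemma~\ref{lemma:rank1-1} to $V^\star$, writing $V^\star = \sum_k V_k$ with each $V_k\succeq 0$ rank one and each $V_k$ satisfying the linear equality constraint \eqref{eq:dopt2cs}. Set $t_k = \Tr{\begin{bmatrix} 0_{m,n} & I_m \end{bmatrix} V_k \begin{bmatrix} 0_{m,n} & I_m \end{bmatrix}^*} \geq 0$, so that $\sum_k t_k \leq 1$. By linearity of the objective, $\mu_{\text{opt}} = \Tr{\begin{bmatrix} C & D \end{bmatrix} V^\star \begin{bmatrix} C & D \end{bmatrix}^*} = \sum_k \Tr{\begin{bmatrix} C & D \end{bmatrix} V_k \begin{bmatrix} C & D \end{bmatrix}^*}$. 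Discard any index with $t_k = 0$ (the corresponding $V_k$ has zero $w$-block; I would remark that by the equality constraint and Schur stability its $x$-block must then also vanish, so $V_k = 0$ and it contributes nothing to the objective either). For each remaining $k$, the rescaled matrix $\hat V_k = V_k / t_k$ is feasible for \eqref{eq:dsdp}: it is PSD, rank one, satisfies the (homogeneous) equality constraint, and has $w$-block trace exactly $1 \leq 1$. Hence $\Tr{\begin{bmatrix} C & D \end{bmatrix}\hat V_k \begin{bmatrix} C & D \end{bmatrix}^*} \leq \mu_{\text{opt}}$, i.e. $\Tr{\begin{bmatrix} C & D \end{bmatrix} V_k \begin{bmatrix} C & D \end{bmatrix}^*} \leq t_k\,\mu_{\text{opt}}$. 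Summing over $k$ gives $\mu_{\text{opt}} \leq \left(\sum_k t_k\right)\mu_{\text{opt}} \leq \mu_{\text{opt}}$, so equality holds throughout; in particular $\sum_k t_k = 1$ (assuming $\mu_{\text{opt}} > 0$; the case $\mu_{\text{opt}} = 0$ is trivial since then any feasible rank one $V$, e.g.\ built from a single $u_k$, is optimal) and $\Tr{\begin{bmatrix} C & D \end{bmatrix}\hat V_k \begin{bmatrix} C & D \end{bmatrix}^*} = \mu_{\text{opt}}$ for every $k$ with $t_k > 0$. Thus any such $\hat V_k$ is a rank one optimal solution, and we may take $V_{\text{opt}} = \hat V_k$.

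The main obstacle is the bookkeeping at the boundary: ensuring that the normalization $V_k \mapsto V_k/t_k$ is legitimate, which requires ruling out the degenerate case where every $V_k$ in the decomposition has $t_k = 0$. That cannot happen when $\mu_{\text{opt}} > 0$, because then $V^\star$ itself has $w$-block trace at most $1$ and, if that trace were $0$, Schur stability would force $V^\star = 0$ and hence $\mu_{\text{opt}} = 0$, a contradiction. A secondary technical point is justifying existence of the maximizer (compactness of the feasible set), which I would dispatch with the bound described above. Everything else is a routine averaging argument exploiting linearity of $V \mapsto \Tr{\begin{bmatrix} C & D \end{bmatrix} V \begin{bmatrix} C & D \end{bmatrix}^*}$.
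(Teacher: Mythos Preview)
Your proof is correct and follows essentially the same route as the paper: decompose an optimal $V^\star$ via Lemma~\ref{lemma:rank1-1}, normalize each rank-one summand by its $w$-block trace, and use linearity of the objective to conclude that some (in fact every) normalized summand is optimal. The only cosmetic difference is that the paper picks the index $J=\arg\max_k \mu_k/p_k$ and shows $V_J/p_J$ attains $\mu_{\text{opt}}$, whereas your averaging inequality $\mu_{\text{opt}}=\sum_k \mu_k\le(\sum_k t_k)\mu_{\text{opt}}\le \mu_{\text{opt}}$ forces equality for every $k$; you are also more careful about the boundary issues (existence of the maximizer, $t_k=0$, $\mu_{\text{opt}}=0$) than the paper is.
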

\begin{proof}
Let $V_{\text{opt}}$, and $\mu_{\text{opt}}$ be the optimal solution and the optimal value of \eqref{eq:dsdp}, respectively. From Lemma \ref{lemma:rank1-1}, there exist rank one matrices $V_k$ such that $V_{\text{opt}} = \sum_k V_k$ and $V_k$ is in the feasible set of \eqref{eq:dsdp}.

Define the scalar values:
\BEAS
p_k &=& \Tr{\begin{bmatrix} 0_{m,n} & I_m \end{bmatrix} V_k \begin{bmatrix} 0_{m,n} & I_m \end{bmatrix}^*},\\
\mu_k &=& \Tr{\begin{bmatrix} C & D \end{bmatrix} V_k \begin{bmatrix} C & D \end{bmatrix}^*}.
\EEAS
Then, we must have $\mu_{\text{opt}} = \sum_k \mu_k$ and $\sum_k p_k \leq 1$.
Let $J$ be the index such that
$$J = \underset{k}{\text{argmax }} \frac{\mu_k}{p_k}.$$

We will show that $\hat{V} = \frac{1}{p_J}V_J$ is a rank one optimal solution of \eqref{eq:dsdp}. 
It is easy to check that $\hat{V}$ is a feasible point of \eqref{eq:dsdp}.
Moreover,
\BEAS
&&\Tr{\begin{bmatrix} C & D \end{bmatrix} \hat{V} \begin{bmatrix} C & D \end{bmatrix}^*} = \frac{\mu_J}{p_J} \\
 &\geq& \sum_k p_k \frac{\mu_J}{p_J} \geq \sum_k p_k \frac{\mu_k}{p_k} = \sum_k \mu_k= \mu_{\text{opt}}.
\EEAS
Therefore, $\Tr{\begin{bmatrix} C & D \end{bmatrix} \hat{V} \begin{bmatrix} C & D \end{bmatrix}^*} \geq \mu_{\text{opt}}$ which implies that $\Tr{\begin{bmatrix} C & D \end{bmatrix} \hat{V} \begin{bmatrix} C & D \end{bmatrix}^*} = \mu_{\text{opt}}$, and $\hat{V}$ is a rank one optimal solution.
\end{proof}

Now we are ready to prove Theorem \ref{thm:eq}.

\begin{proof}[Proof of Theorem \ref{thm:eq}]
From Proposition \ref{prop:rankone}, we obtain a rank one optimal solution $V_{\text{opt}}$ of \eqref{eq:dsdp}.
Since Rank$(V_{\text{opt}}) = 1$, there exist a pair of vectors $x_{\text{opt}} \in \mathbb{C}^n$, and $w_{\text{opt}} \in \mathbb{C}^m$ such that $V_{\text{opt}} = \begin{bmatrix} x_{\text{opt}}\\w_{\text{opt}} \end{bmatrix}\begin{bmatrix} x_{\text{opt}}\\w_{\text{opt}} \end{bmatrix}^*$.
Since $V_{\text{opt}}$ satisfies \eqref{eq:dopt2cs}, we have that
\BEAS
&&\begin{bmatrix} I_n &  0_{n,m} \end{bmatrix} \begin{bmatrix} x_{\text{opt}}\\w_{\text{opt}} \end{bmatrix}\begin{bmatrix} x_{\text{opt}}\\w_{\text{opt}} \end{bmatrix}^* \begin{bmatrix} I_n\\ 0_{m,n} \end{bmatrix} \\
&=& \begin{bmatrix} A &  B \end{bmatrix} \begin{bmatrix} x_{\text{opt}}\\w_{\text{opt}}\end{bmatrix}\begin{bmatrix} x_{\text{opt}}\\w_{\text{opt}}\end{bmatrix}^* \begin{bmatrix} A^*\\ B^* \end{bmatrix},
\EEAS
and from Corollary \ref{cor:vector-1} in the appendix, there exists a scalar $\theta_{\text{opt}}$ such that
\BEAS
e^{j\theta_{\text{opt}}} x_{\text{opt}}= Ax_{\text{opt}} + Bw_{\text{opt}}.
\EEAS
Therefore, $w_k = e^{j\theta_{\text{opt}} k} w_{\text{opt}}$ results in ${x}_{k} = (e^{j\theta_{\text{opt}}k}I_n - A^k)x_{\text{opt}}$.
By substitution, we conclude that $\{w_k\}$ achieves $\mu_{\text{opt}}$. Therefore $\{w_k\}$ and $\{x_k\}$ are the optimal solutions of \eqref{eq:dopt1}.
\end{proof}

%
\begin{remark}
\normalfont
In the proof, we construct $\mathbf{w}$, the optimal solution of \eqref{eq:dopt1}, and this signal turns out to be a sinusoid. This is a well known fact in the literature, since the $H_{\infty}$ norm of the system is the maximum value in the Bode magnitude plot. 
However, in contrast to other approaches, {\it{e.g.}} \cite{Rantzer:2011wn}, we explicitly construct the input $\mathbf{w}$, and its spectrum $\theta_{\text{opt}}$.
\end{remark}

\subsection{The optimal input extraction}
As Remark 1 points out, we can construct the optimal input $\mathbf{w}$ by solving \eqref{eq:dsdp} based on our new proof.
Suppose we obtain a solution $V_{\text{opt}}$ of \eqref{eq:dsdp}.
If $V_{\text{opt}}$ is rank one, then it requires no additional step.
Simply find a pair of vectors $(x_{\text{opt}},w_{\text{opt}})$ such that $V_{\text{opt}} = \begin{bmatrix} x_{\text{opt}}\\w_{\text{opt}} \end{bmatrix}\begin{bmatrix} x_{\text{opt}}\\w_{\text{opt}} \end{bmatrix}^*$. Then, $\theta_{\text{opt}}$ such that $e^{j\theta_{\text{opt}}} x_{\text{opt}} = Ax_{\text{opt}}+Bw_{\text{opt}}$  is guaranteed to exist.
Therefore, we can use element-wise division between $x_{\text{opt}}$ and $Ax_{\text{opt}}+Bw_{\text{opt}}$ to find the spectrum $\theta_{\text{opt}}$.
Then, $w_k = e^{j\theta_{\text{opt}} k} w_{\text{opt}}$.

If $V_{\text{opt}}$ is not rank one, we can use the procedure in the proof of Proposition \ref{prop:rankone} to recover the rank one solution, then apply the aforementioned procedure.
To do this, we need a unitary matrix $U$ which satisfies
$$ \begin{bmatrix} I_n & 0_{n,m} \end{bmatrix} V_{\text{opt}}^{1/2} = \begin{bmatrix} A & B \end{bmatrix} V_{\text{opt}}^{1/2}U.$$
We modify the construction in \cite{iwasaki2000generalized}, to find such a unitary matrix $U$.
The correctness of this algorithm can be easily shown by substitution.

\noindent
\makebox[0.5\textwidth]{\rule{0.5\textwidth}{.1pt}}\\
\textbf{Algorithm 1}\\
\textbf{Input:} Complex matrices, $F,G$ such that $FF^* = GG^*$\\
\textbf{Output:} A unitary matrix $U$ such that $F = GU$\\
\makebox[0.5\textwidth]{\rule{0.5\textwidth}{.1pt}}\\
\begin{enumerate}
\item Set $P = F+G$, and $Q = F-G$
\item Find the SVD of $P = U_P\Sigma_PV_P^*$, and let $r = \textup{Rank}(P)$
\item Set $\begin{bmatrix}R&S\end{bmatrix} = \begin{bmatrix} I_r & 0 \end{bmatrix} V_P^*P^{\dagger}QV_P$
\item Set $\Delta = V_P\begin{bmatrix}R&S\\-S^* & 0\end{bmatrix}V_P^*$
\item $U = (I+\Delta)(I-\Delta)^{-1}$.
\end{enumerate}
\makebox[0.5\textwidth]{\rule{0.5\textwidth}{.1pt}}\\
\noindent
%
%
%
By applying the Algorithm 1 to $F = \begin{bmatrix} I_n & 0_{n,m} \end{bmatrix} V_{\text{opt}}^{1/2}$, and $G = \begin{bmatrix} A & B \end{bmatrix} V_{\text{opt}}^{1/2}$ we obtain a desired unitary matrix $U$ in the proof of Lemma \ref{lemma:rank1-1}. 
The second step is to perform eigenvalue decomposition of $U$ to have $U = \sum_k e^{j\theta_k} u_ku_k^*$, where $u_k$ is the eigenvector of $U$.
The third step is to find $V_k = V_{\text{opt}}^{1/2}u_k\left(V_{\text{opt}}^{1/2}u_k\right)^*$.
The final step is to find a index $J$ which maximizes $\frac{\mu_k}{p_k}$ as in the Proposition \ref{prop:rankone}. 
Then $V_J$ is a rank one optimal solution.

%
%
%

\subsection{Connection to the KYP lemma}

The Lagrangian dual of our optimization \eqref{eq:dsdp} generates the optimization derived from the KYP lemma:
\begin{equation}
\begin{aligned}
& \underset{\lambda, P} {\text{min}}
& & \lambda\\
& \text{s. t. }
& & \begin{bmatrix} A^*PA - P & A^*PB\\B^*PA & B^*PB \end{bmatrix}\\
&&&+ \begin{bmatrix} C^*C & C^*D\\D^*C & D^*D-\lambda I_m  \end{bmatrix}\preceq 0\\
&&& \lambda \geq 0, P = P^*.
\end{aligned}
\label{eq:dual-dsdp}
\end{equation}

However, there is no guarantee for the optimal value of \eqref{eq:dual-dsdp} to be the same as the $H_{\infty}$ norm of the system. 
In addition, even if the duality gap is zero, there may be no dual feasible point $(P, \lambda)$ achieving the dual optimum value.
This is the reason why the KYP lemma with a non-strict inequality requires an additional assumption \cite{Rantzer:2011wn}, controllability of $(A,B)$, to guarantee the existence of a Lyapunov matrix $P$.

The following example shows the case when strong duality fails.
\begin{example}
Consider the scalar system with $(A,B,C,D) = (0,0,1,1)$. The optimal value of \eqref{eq:dsdp} is $1$, and so as $\|\mathcal{M}\|_{\infty} = 1$,
and the dual optimal value of \eqref{eq:dual-dsdp} is also $1$. However, the dual optimal solution does not exist because $(P, \lambda) \rightarrow (+\infty, 1)$ generates the dual optimal value.
\end{example}
%
%
%

The following proposition claims that the controllability is a sufficient condition for strong duality.
\begin{proposition}
Suppose $(A,B)$ is controllable. Then, strong duality holds between \eqref{eq:dsdp} and \eqref{eq:dual-dsdp}.
\label{prop:strongdual}
\end{proposition}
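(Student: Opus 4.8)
The plan is to reduce the statement to Slater's condition for semidefinite programs. Since \eqref{eq:dual-dsdp} is the Lagrangian dual of \eqref{eq:dsdp}, the conic strong duality theorem (see, e.g., \cite{boyd2004convex}) says that if the primal \eqref{eq:dsdp} has a strictly feasible point and a finite optimal value, then there is no duality gap \emph{and} the dual \eqref{eq:dual-dsdp} attains its optimum; this attainment is exactly the notion of strong duality used here (cf. the scalar example above, where the gap is zero but the dual optimum is not attained). Controllability will enter only through the construction of the strictly feasible $V$.

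So the crux is to exhibit a Hermitian $V \succ 0$ satisfying the equality \eqref{eq:dopt2cs} together with the trace constraint in its strict form. Writing $V = \begin{bmatrix} V_{xx} & V_{xw} \\ V_{xw}^* & V_{ww}\end{bmatrix}$, I would try the block-diagonal ansatz $V_{xw} = 0$, $V_{ww} = \epsilon I_m$ for a small $\epsilon > 0$. With this choice the constraint \eqref{eq:dopt2cs} collapses to the discrete Lyapunov equation $V_{xx} - A V_{xx} A^* = \epsilon B B^*$, which, since $A$ is Schur, has the unique solution $V_{xx} = \epsilon \sum_{k \geq 0} A^k B B^* (A^*)^k = \epsilon W_c$, where $W_c$ is the controllability Gramian of $(A,B)$. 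Controllability of $(A,B)$ is precisely the statement $W_c \succ 0$, so $V = \begin{bmatrix} \epsilon W_c & 0 \\ 0 & \epsilon I_m\end{bmatrix} \succ 0$, and taking $\epsilon < 1/m$ makes $\Tr{V_{ww}} = \epsilon m < 1$. Hence $V$ lies in the relative interior of the feasible set of \eqref{eq:dsdp}, i.e. Slater's condition holds.

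Finiteness of $\mu_{\text{opt}}$ is the only remaining ingredient and is harmless: the dual \eqref{eq:dual-dsdp} is feasible (take $P$ the solution of a suitable Lyapunov equation and $\lambda$ sufficiently large), so weak duality already gives $\mu_{\text{opt}} \leq \lambda < \infty$; alternatively one may simply invoke Theorem \ref{thm:eq}. Feeding Slater's condition and finiteness into the conic strong duality theorem then closes the argument.

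The only genuine step is the choice of ansatz in the second paragraph: realizing that a block-diagonal $V$ turns \eqref{eq:dopt2cs} into a Lyapunov equation whose solution is a scalar multiple of $W_c$, so that primal strict feasibility becomes equivalent to $W_c \succ 0$, hence to controllability. Everything after that is routine bookkeeping. It is perhaps worth remarking that the dual \eqref{eq:dual-dsdp} is strictly feasible \emph{regardless} of controllability, which is consistent with the fact that the primal optimum is always attained (Proposition \ref{prop:rankone}) while the dual optimum need not be.
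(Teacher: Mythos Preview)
Your proof is correct and takes a genuinely different route from the paper. The paper builds a strictly feasible $V$ by assembling $n+m$ rank-one pieces: using pole placement it chooses $K$ so that $A-BK$ has $n$ distinct unit-modulus eigenvalues $e^{j\theta_1},\dots,e^{j\theta_n}$ with eigenvectors $x_i$, sets $v_i=\begin{bmatrix}x_i\\-Kx_i\end{bmatrix}$, and then fills in the remaining $m$ directions with a basis of the null space of $\begin{bmatrix}A-e^{j\theta_0}I & B\end{bmatrix}$ (which has the right dimension by the PBH test); linear independence of all $n+m$ vectors is argued directly, giving $V=\sum_i v_iv_i^*\succ 0$. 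Your block-diagonal ansatz is far more economical: it collapses \eqref{eq:dopt2cs} to the discrete Lyapunov equation whose solution is the controllability Gramian, so strict feasibility becomes literally the statement $W_c\succ 0$. The one thing the paper's construction buys is portability: since each rank-one piece corresponds to a freely chosen frequency $\theta_i$, the same argument (as the paper remarks in a footnote) carries over verbatim to the bounded-frequency problem \eqref{eq:dsdp-low} by placing all the $\theta_i$ inside $[-\theta_0,\theta_0]$. Your Gramian $V$ need not satisfy the extra inequality \eqref{eq:lowcov}, so that extension would require a separate argument.
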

The basic idea is to construct a positive definite feasible point $V \succ 0$ for \eqref{eq:dsdp}, and this shows that Slater's condition for strong duality is satisfied.
Since the construction is very technical, we relegate the proof to the appendix.

\section{Bounded Frequency $H_{\infty}$ Analysis}
As we have seen, the spectrum of the input that achieves the $H_{\infty}$ norm lies in $[-\pi,\pi]$.
In this section, we consider inputs with bounded frequency. 
Specifically, for a given frequency $0 < \theta_0 < \pi$, we consider the input with the specific form: $\mathcal{W}_L = \{\mathbf{w}:w_k = e^{j\theta k} w_s$, $\theta \in [-\theta_0,\theta_0]\}$.
This formulation finds the maximum value of the Bode magnitude plot of the system in the low frequency region, $[-\theta_0,\theta_0]$, not in the entire region, $[-\pi,\pi]$:
%
\begin{align}
\underset{\mathbf{w},\mathbf{x}} {\text{max}}
~~ &\lim_{N \rightarrow \infty} \frac{1}{N}\sum_{k=0}^{N-1} {z}_k^*{z}_k\label{eq:dopt-low}\\
 \text{s. t. }~~ &\eqref{eq:doptcs} - \eqref{eq:doptce} \nonumber\\
 & \mathbf{w} \in \mathcal{W}_L.
\end{align}


Consider the input $w_k = e^{j\theta k}w_0 \in \mathcal{W}_L$.
This results in $x_k = e^{j\theta k}s + h_k$, where 
$$e^{j\theta} s = As + Bw_0,$$
and $h_k = - A^ks$ is a transient term which goes to zero asymptotically.
Notice that from the dynamics, we have that
\BEAS
x_{k+1}x_k^* + x_kx_{k+1}^* = (Ax_k+Bw_k)x_k^* + x_k(Ax_k+Bw_k)^*,
\EEAS
and by substituting $x_k = e^{j\theta k}s + h_k$, we get
\BEAS
x_{k+1}x_k^* + x_kx_{k+1}^* &=& 2\cos\theta ss^*  + e^{j\theta k}(1+e^{j\theta}) sh_k^* \\
&&+ e^{-j\theta k}(1+e^{-j\theta}) h_ks^*+h_kh_k^*.
\EEAS
This implies that $V$ in \eqref{eq:cov} satisfies 

\BEAS
&&2\cos\theta \begin{bmatrix} I_n & 0_{n,m} \end{bmatrix}V \begin{bmatrix} I_n \\ 0_{m,n} \end{bmatrix} \\
&=& \begin{bmatrix} A & B \end{bmatrix} V \begin{bmatrix} I_n \\ 0_{m,n} \end{bmatrix}+ \begin{bmatrix} I_n & 0_{n,m} \end{bmatrix} V \begin{bmatrix}A^*\\B^*\end{bmatrix}.
\EEAS
In addition, since $\theta \in [-\theta_0,\theta_0]$ and $\theta_0 \leq \pi$, we have $\cos\theta \geq \cos \theta_0$, and 
\begin{eqnarray}
&&2\cos\theta_0 \begin{bmatrix} I_n & 0_{n,m} \end{bmatrix}V \begin{bmatrix} I_n \\ 0_{m,n} \end{bmatrix}\nonumber \\
&\preceq& \begin{bmatrix} A & B \end{bmatrix} V \begin{bmatrix} I_n \\ 0_{m,n} \end{bmatrix}+ \begin{bmatrix} I_n & 0_{n,m} \end{bmatrix} V \begin{bmatrix}A^*\\B^*\end{bmatrix}. \label{eq:lowcov}
\end{eqnarray}

Therefore, a relaxation of \eqref{eq:dopt-low} gives
\begin{align}
\mu^{L}_{\text{opt}}:=
\underset{V} {\text{max}}~~
& \Tr{\begin{bmatrix} C & D \end{bmatrix} V \begin{bmatrix} C & D \end{bmatrix}^*}\label{eq:dsdp-low}\\
\text{s. t.}~~ &\eqref{eq:dopt2cs} - \eqref{eq:dopt2ce}, \eqref{eq:lowcov}\nonumber
\end{align}
Notice that we add \eqref{eq:lowcov} to \eqref{eq:dsdp} because of the constraint $\mathbf{w} \in \mathcal{W}_L$.

Since this is a relaxed version of \eqref{eq:dopt-low}, the optimal value of \eqref{eq:dsdp-low} yields an upper bound of \eqref{eq:dopt-low}, and in fact, this upper bound is tight.
 
\begin{proposition}
The optimal value of \eqref{eq:dsdp-low} equals the optimal value of \eqref{eq:dopt-low}.
\end{proposition}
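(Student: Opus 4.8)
The plan is to mirror the proof of Theorem~\ref{thm:eq}, adapting the rank-one machinery to the extra inequality constraint \eqref{eq:lowcov}. As before, one inequality is easy: since \eqref{eq:dsdp-low} is a relaxation of \eqref{eq:dopt-low}, we already have $\mu^L_{\text{opt}} \geq$ (optimal value of \eqref{eq:dopt-low}). So the whole content is the reverse inequality, and for that I would extract from an optimal $V$ of \eqref{eq:dsdp-low} a genuine admissible input $\mathbf{w} \in \mathcal{W}_L$ achieving the same objective value.

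First I would establish a rank-one decomposition lemma analogous to Lemma~\ref{lemma:rank1-1}, but now respecting \eqref{eq:lowcov}. The obstacle is that \eqref{eq:lowcov} is an inequality, not an equality, so I cannot directly invoke Lemma~\ref{lemma:ra}. The trick I would use: introduce a slack, writing the defect matrix $\begin{bmatrix} A & B \end{bmatrix} V \begin{bmatrix} I_n \\ 0_{m,n} \end{bmatrix}+ \begin{bmatrix} I_n & 0_{n,m} \end{bmatrix} V \begin{bmatrix}A^*\\B^*\end{bmatrix} - 2\cos\theta_0 \begin{bmatrix} I_n & 0_{n,m} \end{bmatrix}V \begin{bmatrix} I_n \\ 0_{m,n} \end{bmatrix} \succeq 0$, and rather than trying to decompose $V$ itself, argue at the level of a single rank-one piece. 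Concretely: by Proposition~\ref{prop:rankone}-style reasoning I would first reduce to showing the claim when the optimal $V$ is extreme in the feasible set of \eqref{eq:dsdp-low}; then use the fact that adding a single linear inequality to the feasible set of \eqref{eq:dsdp} cannot raise the rank of an extreme point by more than one, so an extreme optimal $V$ has rank at most two. (An alternative, cleaner route: note \eqref{eq:lowcov} follows from $2\cos\theta_0\, [I\ 0]V[I\ 0]^* \preceq [A\ B]V[I\ 0]^* + [I\ 0]V[A\ B]^*$, and for a rank-one $V = vv^*$ with $v = \begin{bmatrix} s \\ w_0 \end{bmatrix}$ this is the scalar condition $2\cos\theta_0\, s^*s \le 2\,\Re(s^*(As+Bw_0))$, which by Corollary~\ref{cor:vector-1}-type geometry is exactly solvability of $e^{j\theta}s = As+Bw_0$ with $|\theta| \le \theta_0$.)

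Once a rank-one optimal $V_{\text{opt}} = \begin{bmatrix} s \\ w_0 \end{bmatrix}\begin{bmatrix} s \\ w_0 \end{bmatrix}^*$ is in hand, I would feed \eqref{eq:dopt2cs} into Corollary~\ref{cor:vector-1} to get $e^{j\theta} s = As + Bw_0$ for some real $\theta$, then use that \eqref{eq:lowcov} applied to this rank-one $V$ forces $\cos\theta \ge \cos\theta_0$, i.e. $\theta \in [-\theta_0,\theta_0]$, so the reconstructed input $w_k = e^{j\theta k} w_0$ indeed lies in $\mathcal{W}_L$. Substituting $x_k = e^{j\theta k}s - A^k s$ and taking the power-norm limit (the transient $A^k s \to 0$ since $A$ is Schur) shows $\|\mathbf{z}\|_p^2 = \Tr{\begin{bmatrix} C & D \end{bmatrix} V_{\text{opt}} \begin{bmatrix} C & D \end{bmatrix}^*} = \mu^L_{\text{opt}}$, giving the reverse inequality and hence equality.

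The main obstacle I anticipate is the rank-one reduction in the presence of the extra inequality constraint: the decomposition of Lemma~\ref{lemma:rank1-1} is driven by the \emph{equality} \eqref{eq:dopt2cs}, and its rank-one pieces $V_k$ need not individually satisfy \eqref{eq:lowcov}. The resolution I would pursue is the rank bound on extreme points plus a short case analysis — or, if one wants to stay fully elementary, to observe that the pieces $V_k = V^{1/2}u_ku_k^*(V^{1/2})^*$ coming from the eigenvectors $u_k$ of the unitary $U$ in Lemma~\ref{lemma:rank1-1} each satisfy $e^{j\theta_k}(V^{1/2}u_k)_{1:n} = [A\ B](V^{1/2}u_k)$, and then show that the convex combination $V = \sum_k V_k$ satisfying \eqref{eq:lowcov} forces at least one index $k$ with $\theta_k \in [-\theta_0,\theta_0]$ and $\mu_k/p_k \ge \mu^L_{\text{opt}}$ (an averaging argument over the decomposition, exactly as in Proposition~\ref{prop:rankone} but filtering to the admissible frequencies). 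That admissible piece is the desired rank-one optimal solution, and the rest is substitution.
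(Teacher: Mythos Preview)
Your overall plan---reduce to a rank-one optimal $V$, then reconstruct a sinusoidal input and use the constraints to pin down the frequency---mirrors the paper's, and your final reconstruction step is correct: once $V_{\text{opt}}$ has rank one, Corollary~\ref{cor:vector-1} together with the rank-one form of \eqref{eq:lowcov} force $\cos\theta\ge\cos\theta_0$, hence $\theta\in[-\theta_0,\theta_0]$, and substitution finishes as in Theorem~\ref{thm:eq}.

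The gap is in the rank-one reduction. Your first proposed fix---that \eqref{eq:lowcov} is ``a single linear inequality'' raising the extreme-point rank by at most one---is incorrect: \eqref{eq:lowcov} is an $n\times n$ semidefinite constraint, not a scalar one, so the usual face-dimension count does not give rank $\le 2$. Your second fix, the filtering/averaging argument, also fails as stated: the pieces $V_k$ produced by Lemma~\ref{lemma:rank1-1} with $|\theta_k|>\theta_0$ are infeasible for \eqref{eq:dsdp-low}, so the Proposition~\ref{prop:rankone} argument (which uses feasibility of every $V_k$) only yields $\max_k \mu_k/p_k \ge \mu^L_{\text{opt}}$ over \emph{all} $k$, and nothing rules out the maximizer having $\theta_k\notin[-\theta_0,\theta_0]$. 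Knowing that $\sum_k(\cos\theta_k-\cos\theta_0)s_ks_k^*\succeq 0$ does not force the best ratio onto an admissible index.

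The paper resolves this by strengthening the decomposition lemma itself (Lemma~\ref{lemma:rank1-low}). With $F=\begin{bmatrix}I_n&0\end{bmatrix}V^{1/2}$ and $G=\begin{bmatrix}A&B\end{bmatrix}V^{1/2}$, the hypotheses $FF^*=GG^*$ and $FG^*+GF^*\succeq 2\cos\theta_0\,FF^*$ imply (Lemma~\ref{lemma:syou}, proved via Iwasaki's skew-symmetric contraction lemma) that the unitary $U$ with $F=GU$ can be \emph{chosen} so that $U+U^*\succeq 2\cos\theta_0\,I$. Then every eigenvalue $e^{j\theta_k}$ of $U$ satisfies $\cos\theta_k\ge\cos\theta_0$, so each rank-one piece $V_k=V^{1/2}u_k(V^{1/2}u_k)^*$ already lies in $\mathcal{F}_L$, and the Proposition~\ref{prop:rankone} argument goes through verbatim. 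In short, you correctly located the obstacle but missed the key lemma that dissolves it: the right move is not to filter an arbitrary decomposition after the fact, but to pick the unitary so that no filtering is needed.
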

The proof is almost identical once we have a rank one decomposition of $V_{\text{opt}}$.
For notational simplicity, let $\mathcal{F}_L$ be the feasible set of \eqref{eq:dsdp-low}.
\begin{lemma}[Rank one decomposition]
For all $V \in \mathcal{F}_L$, there exists a set of rank one matrices, $V_k \in \mathcal{F}_L$ such that,
\BEAS
V = \sum_k V_k, ~\textup{Rank}(V_k) = 1.
\EEAS
\label{lemma:rank1-low}
\end{lemma}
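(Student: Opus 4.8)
The plan is to mimic the rank-one decomposition argument of Lemma~\ref{lemma:rank1-1}, but now tracking the extra constraint \eqref{eq:lowcov} through the decomposition. Recall that in Lemma~\ref{lemma:rank1-1} we wrote $F = \begin{bmatrix} I_n & 0_{n,m} \end{bmatrix} V^{1/2}$ and $G = \begin{bmatrix} A & B \end{bmatrix} V^{1/2}$, used $FF^* = GG^*$ to obtain a unitary $U$ with $F = GU$, diagonalized $U = \sum_k e^{j\theta_k} u_k u_k^*$, and set $V_k = V^{1/2} u_k (V^{1/2} u_k)^*$. This decomposition automatically preserves $V \succeq 0$, the rank-one property, and the equality constraint \eqref{eq:dopt2cs}. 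What remains is to check that each $V_k$ also satisfies \eqref{eq:lowcov}.

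The key observation is that for this particular decomposition the two sides of \eqref{eq:lowcov} evaluated on $V_k$ are scalar multiples of the \emph{same} rank-one matrix. Indeed, writing $v_k = V^{1/2} u_k$ so that $V_k = v_k v_k^*$, the left side of \eqref{eq:lowcov} becomes $2\cos\theta_0 \begin{bmatrix} I_n & 0_{n,m}\end{bmatrix} v_k v_k^* \begin{bmatrix} I_n \\ 0_{m,n}\end{bmatrix} = 2\cos\theta_0\, p_k p_k^*$ where $p_k = \begin{bmatrix} I_n & 0_{n,m}\end{bmatrix} v_k = F u_k$, and the right side becomes $\begin{bmatrix} A & B\end{bmatrix} v_k v_k^* \begin{bmatrix} I_n \\ 0_{m,n}\end{bmatrix} + \begin{bmatrix} I_n & 0_{n,m}\end{bmatrix} v_k v_k^* \begin{bmatrix} A^* \\ B^*\end{bmatrix} = q_k p_k^* + p_k q_k^*$ where $q_k = \begin{bmatrix} A & B\end{bmatrix} v_k = G u_k$. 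But from the proof of Lemma~\ref{lemma:rank1-1} we have $F u_k = e^{j\theta_k} G u_k$, i.e.\ $p_k = e^{j\theta_k} q_k$; hence $q_k p_k^* + p_k q_k^* = (e^{-j\theta_k} + e^{j\theta_k}) p_k p_k^* = 2\cos\theta_k\, p_k p_k^*$. So \eqref{eq:lowcov} for $V_k$ reduces to $2\cos\theta_0\, p_k p_k^* \preceq 2\cos\theta_k\, p_k p_k^*$, which holds as soon as $\cos\theta_k \geq \cos\theta_0$.

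The main obstacle is therefore to show that we may choose the eigenphases $\theta_k$ of $U$ so that $\cos\theta_k \geq \cos\theta_0$ for every $k$ with $p_k \neq 0$. This does not follow from unitarity of $U$ alone; it must be extracted from the fact that $V$ itself satisfies \eqref{eq:lowcov}. The idea is to sum the identity just derived over $k$: since $V = \sum_k V_k$ and the constraint \eqref{eq:dopt2cs} holds for $V$, one gets $\begin{bmatrix} A & B\end{bmatrix} V \begin{bmatrix} I_n \\ 0_{m,n}\end{bmatrix} + \begin{bmatrix} I_n & 0_{n,m}\end{bmatrix} V \begin{bmatrix} A^* \\ B^*\end{bmatrix} = \sum_k 2\cos\theta_k\, p_k p_k^*$, while the left side of \eqref{eq:lowcov} for $V$ equals $\sum_k 2\cos\theta_0\, p_k p_k^*$. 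Thus \eqref{eq:lowcov} for $V$ says exactly $\sum_k (\cos\theta_k - \cos\theta_0) p_k p_k^* \succeq 0$. This is a sum of rank-one Hermitian terms and does not by itself force each coefficient to be nonnegative, so the remaining work is to argue that the $\theta_k$ can be regrouped or reselected — exploiting the freedom in the eigendecomposition of $U$ on its eigenspaces, and the fact that $U$ is the Cayley-type transform appearing in Algorithm~1 — so that the offending directions (those $u_k$ with $\cos\theta_k < \cos\theta_0$) carry $p_k = 0$, i.e.\ lie in the kernel of $F$. I expect this regrouping step, together with verifying that the reselected $V_k$ still sum to $V$ and still satisfy \eqref{eq:dopt2cs}, to be the technical heart of the argument; the trace bound $\sum_k p_k^{(w)} \leq 1$ needed for feasibility in $\mathcal{F}_L$ is then inherited exactly as in the proof of Lemma~\ref{lemma:rank1-1}.
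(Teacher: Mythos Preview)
Your reduction is correct up to the point where you show that \eqref{eq:lowcov} for $V_k$ is equivalent to $\cos\theta_k \geq \cos\theta_0$ whenever $p_k \neq 0$. That is exactly the right intermediate statement. The gap is in how you propose to secure it.

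Your plan is to start from an \emph{arbitrary} unitary $U$ with $F = GU$ (as produced by Lemma~\ref{lemma:ra} or Algorithm~1) and then ``regroup or reselect'' eigenvectors so that the bad phases land in $\ker F$. But the freedom in the eigendecomposition of a fixed $U$ lives only inside eigenspaces and cannot alter the eigenvalues $e^{j\theta_k}$ themselves; and the non-uniqueness of $U$ in $F=GU$ is exactly the freedom to modify $U$ on $\ker G = \ker F$, which is precisely what you would need to control --- so you are chasing your tail. The aggregate inequality $\sum_k (\cos\theta_k - \cos\theta_0)\, p_k p_k^* \succeq 0$ that you derive genuinely does not force the individual coefficients to be nonnegative, and nothing in Algorithm~1 helps here since that construction ignores \eqref{eq:lowcov} entirely.

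The paper closes this gap not by post-hoc repair but by choosing the right $U$ from the outset. Rewriting \eqref{eq:lowcov} in terms of $F,G$ gives $FG^* + GF^* \succeq 2\cos\theta_0\, FF^*$ in addition to $FF^* = GG^*$. Lemma~\ref{lemma:syou} in the appendix (proved via Iwasaki's skew-symmetric lemma and a Cayley transform, which is what Algorithm~2 implements) then furnishes a unitary $U$ with $F = GU$ \emph{and} $U + U^* \succeq 2\cos\theta_0 I$. The second condition says every eigenvalue $e^{j\theta_k}$ of $U$ satisfies $\cos\theta_k \geq \cos\theta_0$, for all $k$ simultaneously, so your rank-one pieces $V_k = V^{1/2}u_k(V^{1/2}u_k)^*$ lie in $\mathcal{F}_L$ with no further work. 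In short: the missing ingredient is Lemma~\ref{lemma:syou}, not a regrouping argument.
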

\begin{proof}
Define $F = \begin{bmatrix} I_n & 0_{n,m} \end{bmatrix} V^{1/2}$, and $G = \begin{bmatrix} A & B \end{bmatrix} V^{1/2}$.
Then from Lemma \ref{lemma:syou} in the appendix, there exists a unitary matrix $U$ such that $F = GU$, and $U+U^* \succeq 2\cos\theta I$.

Being unitary, $U = \sum_{k}e^{j\theta_k}u_ku_k^*$. By defining $V_k = V^{1/2}u_k\left(V^{1/2}u_k\right)^*$, we can easily check that $V_k \in \mathcal{F}_L$, and $V = \sum_k V_k$.
\end{proof}

To extract $w_k$ from $V_{\text{opt}}$, we can use exactly same procedure as in Section II. except finding a unitary matrix $U$ due to additional requirement $U+U^* \succeq 2 \cos\theta I$.
We modify an algorithm from \cite{iwasaki2000generalized} to find a desired $U$.

\noindent
\makebox[0.5\textwidth]{\rule{0.5\textwidth}{.1pt}}\\
\textbf{Algorithm 2}\\
\textbf{input:} Complex matrices, $F,G$ such that $FF^* = GG^*$, $FG^*+GF^* \succeq 2\cos\theta I$.\\
\textbf{Output:} A unitary matrix $U$ such that $F = GU$, and $U+U^* \succeq 2 \cos\theta I$.\\
\makebox[0.5\textwidth]{\rule{0.5\textwidth}{.1pt}}\\
\begin{enumerate}
\item Set $\mu = \frac{1-\cos\theta}{1+\cos\theta}$
\item Set $P = \sqrt{\mu}(F+G)$, and $Q = F-G$
\item Find the SVD of $P = U_P\Sigma_PV_P^*$, and let $r = \textup{Rank}(P)$
\item Set $\begin{bmatrix}R&S\end{bmatrix} = \begin{bmatrix} I_r & 0 \end{bmatrix} V_P^*P^{\dagger}QV_P$
\item Set $\Delta = V_P\begin{bmatrix}R&S\\-S^* & -S^*R(I_r+R^2)^{\dagger}S\end{bmatrix}V_P^*$
\item $U = (I+\sqrt{\mu}\Delta)(I-\sqrt{\mu}\Delta)^{-1}$
\end{enumerate}
\makebox[0.5\textwidth]{\rule{0.5\textwidth}{.1pt}}\\
\noindent

Also, notice that from Lemma \ref{lemma:syou}, $\theta_{\text{opt}}$ is guaranteed to be in $[-\theta_0,\theta_0]$.

%
%
%
\subsection{Connection to the Generalized KYP lemma}
Following is the Lagrangian dual problem of \eqref{eq:dsdp-low}:
\begin{equation}
\begin{aligned}
& \underset{\lambda, P} {\text{min}}
& & \lambda\\
& \text{s. t. }
&& \begin{bmatrix} A&B\\I&0\end{bmatrix}^*\begin{bmatrix} P&Q\\Q&-P-2\cos\theta_0Q\end{bmatrix}\begin{bmatrix} A&B\\I&0\end{bmatrix}\\
&&& + \begin{bmatrix} C^*C & C^*D\\D^*C & D^*D\end{bmatrix} \preceq \lambda \begin{bmatrix} 0& 0\\0 & I \end{bmatrix}\\
&&& \lambda \geq 0, P = P^*, Q \succeq 0,
\end{aligned}
\label{eq:dual-dsdp-low}
\end{equation}
and this can be also derived from the Generalized KYP lemma \cite{Iwasaki:kx}.
However, as in the $H_{\infty}$ analysis, the strong duality issue arises.

One condition for strong duality is controllability of $(A,B)$. Since the proof is identical with Proposition \ref{prop:strongdual}, we omit the details.

\subsection{$H_{\infty}$ Analysis with high frequency input}
Using similar arguments as in the low frequency input case, we can also include the cases with high frequency inputs, $\mathcal{W}_H = \{\mathbf{w}:w_k = e^{j\theta k} w_s, \theta \in [-\pi,-\theta_0] \cup [\theta_0,\pi]\}$, or the middle frequency inputs, $\mathcal{W}_M = \{\mathbf{w}:w_k = e^{j\theta k} w_s, \theta \in [\theta_1,\theta_2]\}$.
%
For high frequency inputs, $\mathcal{W}_H$, a similar approach as in the low frequency input gives us
\begin{align}
\mu^{{H}}_{\text{opt}}:=
\underset{V} {\text{max}}~~
& \Tr{\begin{bmatrix} C & D \end{bmatrix} V \begin{bmatrix} C & D \end{bmatrix}^*}\label{eq:dsdp-high}\\
\text{s. t.}~~ &\eqref{eq:dopt2cs} - \eqref{eq:dopt2ce}\nonumber\\
& \begin{bmatrix} A & B \end{bmatrix} V \begin{bmatrix} I_n \\ 0_{m,n} \end{bmatrix}+ \begin{bmatrix} I_n & 0_{n,m} \end{bmatrix} V \begin{bmatrix}A^*\\B^*\end{bmatrix}\nonumber\\
&\preceq 2\cos\theta_0 \begin{bmatrix} I_n & 0_{n,m} \end{bmatrix} V \begin{bmatrix} I_n \\ 0_{m,n} \end{bmatrix} \label{eq:covhigh}.
\end{align}

For middle frequency inputs, $[\theta_1,\theta_2]$, we can use \eqref{eq:dsdp-low} by shifting $B,D$.
Define $\theta_c = \frac{1}{2}(\theta_1+\theta_2)$, and $\theta_0 =  \frac{1}{2}(\theta_2-\theta_1)$.
Then the spectrum of $\mathbf{w}$ confined in $[\theta_1,\theta_2]$ is equivalent to that the new input $\tilde{\mathbf{w}} = e^{-j\theta_c}\mathbf{w}$ has finite spectrum on $[-\theta_0,\theta_0]$. In this coordinate, $B\mathbf{w} = Be^{j\theta_c} \tilde{\mathbf{w}}$, and $D\mathbf{w} = De^{j\theta_c} \tilde{\mathbf{w}}$.
Therefore by defining $\tilde{B} = Be^{j\theta_c}$, and $\tilde{D} = De^{j\theta_c}$ then using \eqref{eq:dsdp-low}, we can use an SDP to compute $H_{\infty}$ norm over the middle frequency range.

As a final remark, we could also derive dual problems that are equivalent to the Generalized KYP lemma.
However, since the derivation is similar, we omit the details.

\section{Conclusion}
In this paper, we proposed a simple, direct approach to $H_{\infty}$ analysis. 
Compared to the classical approach based on the  KYP lemma, our approach can construct an explicit input signal that achieves the $H_{\infty}$ norm of the system, and does not require the controllability condition of the system to calculate the $H_{\infty}$ norm.
Moreover, we generalize this approach to the low, middle and high frequency input signals and show the effectiveness of our new approach.




\section*{ACKNOWLEDGMENTS}
The authors gratefully acknowledge the helpful discussions with Prof. John C. Doyle.
\bibliographystyle{IEEEtran}
\bibliography{IEEEabrv,gkyp}

\section*{Appendix}
\subsection{Results from Linear Algebra}
\begin{lemma}[A. Rantzer, 1996]
Let $F,G \in \mathbb C^{n \times m}$.
The following statements are equivalent.\\
(i) $FF^* = GG^*$.\\
(ii) There exists a unitary matrix $U$ such that, $F=GU$.
\label{lemma:ra}
\end{lemma}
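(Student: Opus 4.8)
For (ii) $\Rightarrow$ (i) there is nothing to do: if $F = GU$ with $U$ unitary then $FF^{*} = G\,UU^{*}\,G^{*} = GG^{*}$, so I would dispatch this in one line. The substance is (i) $\Rightarrow$ (ii), and the plan is to manufacture $U$ out of \emph{simultaneous} singular value decompositions of $F$ and $G$ that share the same left singular data, with that shared data read off from the common matrix $M := FF^{*} = GG^{*}$.

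Concretely, I would first fix one spectral decomposition $M = \sum_{i=1}^{r}\sigma_i^{2}\,u_i u_i^{*}$ with $\sigma_i > 0$ and $\{u_i\}_{i=1}^{r}$ orthonormal in $\mathbb{C}^{n}$, and record that $r = \operatorname{Rank}(F) = \operatorname{Rank}(G)$ and $\operatorname{range}(F) = \operatorname{range}(M) = \operatorname{range}(G)$, using $\operatorname{range}(XX^{*}) = \operatorname{range}(X)$. Then set $f_i := \sigma_i^{-1}F^{*}u_i$ and $g_i := \sigma_i^{-1}G^{*}u_i$ in $\mathbb{C}^{m}$; a one-line computation with $FF^{*}u_j = \sigma_j^{2}u_j$ shows $\{f_i\}_{i=1}^{r}$ is orthonormal, and likewise $\{g_i\}_{i=1}^{r}$, while $\sum_i u_i u_i^{*}$ being the orthogonal projector onto $\operatorname{range}(F)=\operatorname{range}(G)$ yields the compact decompositions $F = \sum_{i=1}^{r}\sigma_i u_i f_i^{*}$ and $G = \sum_{i=1}^{r}\sigma_i u_i g_i^{*}$ with identical left factors. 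Finally I would extend $\{f_i\}_{i\le r}$ and $\{g_i\}_{i\le r}$ to orthonormal bases $\{f_i\}_{i\le m}$ and $\{g_i\}_{i\le m}$ of $\mathbb{C}^{m}$ — legitimate since both start from exactly $r$ vectors — and put $U := \sum_{i=1}^{m} g_i f_i^{*}$, which is unitary (it carries one orthonormal basis to another) and satisfies $GU = \sum_{i=1}^{r}\sigma_i u_i f_i^{*} = F$ because $g_i^{*}g_j = \delta_{ij}$.

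The conceptual crux — and the step I would be most careful about — is the claim that a single eigenbasis of $M$ legitimately serves as the left singular system of \emph{both} $F$ and $G$ simultaneously, so that the usual ambiguity on repeated singular values never has to be reconciled across the two matrices; the complementary subtlety is that the null-space completions on the $\mathbb{C}^{m}$ side have equal dimension $m-r$, which is exactly what upgrades the natural partial isometry into a genuine unitary. A tempting shortcut such as $U = G^{\dagger}F + (I - G^{\dagger}G)$ fails precisely because it disregards the freedom on $\operatorname{null}(G)$ (already visible for $F = \begin{bmatrix}1 & 0\end{bmatrix}$, $G = \begin{bmatrix}0 & 1\end{bmatrix}$), which is why the argument must be organized around orthonormal bases rather than pseudoinverses.
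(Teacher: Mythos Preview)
Your argument is correct. The direction (ii) $\Rightarrow$ (i) is indeed immediate, and your construction for (i) $\Rightarrow$ (ii) via a shared left singular system of $F$ and $G$ read off from the common Gram matrix $M=FF^{*}=GG^{*}$ is sound: the key identities $f_i^{*}f_j=\sigma_i^{-1}\sigma_j^{-1}u_i^{*}FF^{*}u_j=\delta_{ij}$ and $\sum_i u_iu_i^{*}F=F$ hold exactly as you claim, and the extension of $\{f_i\},\{g_i\}$ to full orthonormal bases of $\mathbb{C}^m$ is what promotes the partial isometry to a genuine unitary $U$. Your cautionary remark about the pseudoinverse shortcut is also on point.

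As for comparison with the paper: the paper does not actually prove this lemma. Its proof consists solely of a citation to Rantzer's original 1996 work, so there is no in-paper argument to compare against. Your self-contained SVD construction is a standard and perfectly acceptable way to establish the result.
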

\begin{proof}
See \cite{Rantzer:2011wn}.
\end{proof}

For a special case of Lemma \ref{lemma:ra}, consider $f, g \in\mathbb{C}^{n \times 1}$. In this case, a unitary matrix $U$ is actually scalar, and we get following immediate consequence.
\begin{corollary}
$ff^* = gg^*$ if and only if $f = e^{j\theta} g$ for some $\theta$.
\label{cor:vector-1}
\end{corollary}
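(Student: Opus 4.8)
The plan is to obtain this as an immediate specialization of Lemma \ref{lemma:ra} to the case $m = 1$. For the nontrivial direction, suppose $ff^* = gg^*$ with $f,g \in \mathbb{C}^{n\times 1}$. Regarding $f$ and $g$ as $n \times 1$ matrices, Lemma \ref{lemma:ra} (with $n = n$, $m = 1$) yields a unitary matrix $U \in \mathbb{C}^{1\times 1}$ such that $f = gU$. A $1 \times 1$ unitary matrix is just a scalar $u$ with $u^*u = 1$, i.e. $|u| = 1$, hence $u = e^{j\theta}$ for some real $\theta$, and therefore $f = e^{j\theta} g$.

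The converse is a one-line computation: if $f = e^{j\theta} g$ then $ff^* = e^{j\theta} g g^* e^{-j\theta} = gg^*$.

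If one prefers a self-contained argument not appealing to Lemma \ref{lemma:ra}, it also goes through quickly: if $ff^* = gg^* = 0$ then $f = g = 0$ and the claim holds for every $\theta$; otherwise $f,g \neq 0$, and since $\Range(ff^*) = \Span\{f\}$ and $\Range(gg^*) = \Span\{g\}$ are equal, we get $f = cg$ for some nonzero scalar $c$. Taking traces in $ff^* = gg^*$ gives $\|f\|^2 = |c|^2\|g\|^2 = \|g\|^2$, so $|c| = 1$ and $c = e^{j\theta}$.

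There is essentially no obstacle here, since the statement is a corollary by design; the only points deserving a word of care are the identification of a $1\times 1$ unitary matrix with a unit-modulus complex number $e^{j\theta}$, and the degenerate case $g = 0$, in which the equivalence holds vacuously for every choice of $\theta$.
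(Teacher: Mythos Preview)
Your proof is correct and follows exactly the paper's intended route: specialize Lemma~\ref{lemma:ra} to $m=1$ and identify the $1\times 1$ unitary $U$ with a unit-modulus scalar $e^{j\theta}$. The additional self-contained argument and the handling of the degenerate case $g=0$ are nice touches, but not required.
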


\begin{lemma}[T. Iwasaki, 2000]
The following statements are equivalent.\\
(i) $FF^* \preceq GG^*$ and $FG^*+GF^* = 0$.\\
(ii) There exists a skew-symmetric matrix $\Delta = -\Delta^*$ such that,
$F = G\Delta, \|\Delta\| \leq 1$.
\label{lemma:iwasaki}
\end{lemma}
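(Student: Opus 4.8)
For the easy direction (ii) $\Rightarrow$ (i): if $F = G\Delta$ with $\Delta = -\Delta^*$ and $\|\Delta\| \le 1$, then $FF^* = G\Delta\Delta^* G^* \preceq GG^*$ because $\Delta\Delta^* \preceq I$, and $FG^* + GF^* = G\Delta G^* + G\Delta^* G^* = G(\Delta + \Delta^*)G^* = 0$; nothing else is needed. The forward direction is where all the content lies.

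For (i) $\Rightarrow$ (ii), the first step is to normalize $G$. Both the hypotheses of (i) and the conclusion of (ii) are preserved under $(F,G)\mapsto (WFV, WGV)$ for unitary $W, V$, with $\Delta\mapsto V^*\Delta V$, since skew-Hermiticity and the spectral-norm bound are unitarily invariant. Taking an SVD of $G$, I may therefore assume $G = \left[\begin{smallmatrix}\Sigma_1 & 0\\0 & 0\end{smallmatrix}\right]$ with $\Sigma_1\succ 0$ diagonal of size $r = \mathrm{Rank}(G)$. Then $FF^*\preceq GG^*$ forces the last $n-r$ rows of $F$ to vanish, so $F = \left[\begin{smallmatrix}F_{11} & F_{12}\\0 & 0\end{smallmatrix}\right]$, and that same inequality now reads $\big\|[\,\Sigma_1^{-1}F_{11}\ \ \Sigma_1^{-1}F_{12}\,]\big\| \le 1$. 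Writing the unknown $\Delta$ in conformal blocks $\left[\begin{smallmatrix}a & b\\c & d\end{smallmatrix}\right]$, the equation $F = G\Delta$ pins down $a = \Sigma_1^{-1}F_{11}$ and $b = \Sigma_1^{-1}F_{12}$, while $\Delta = -\Delta^*$ forces $c = -b^*$ and leaves $d$ free subject only to $d = -d^*$. The remaining hypothesis $FG^* + GF^* = 0$ is equivalent to $F_{11}\Sigma_1 + \Sigma_1 F_{11}^* = 0$, which is precisely the identity making $a = \Sigma_1^{-1}F_{11}$ skew-Hermitian; and the contraction estimate above says exactly that the top block row $[a\ b]$ of $\Delta$ has norm at most one.

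Everything thus reduces to a matrix completion problem, which is the step I expect to be the main obstacle: given skew-Hermitian $a$ and a matrix $b$ with $\|[a\ b]\|\le 1$, find a skew-Hermitian $d$ making $\left[\begin{smallmatrix}a & b\\-b^* & d\end{smallmatrix}\right]$ a contraction. I would settle this with Parrott's theorem: the minimum over $d$ of $\big\|\left[\begin{smallmatrix}a & b\\-b^* & d\end{smallmatrix}\right]\big\|$ equals the larger of $\|[a\ b]\|$ and $\big\|\left[\begin{smallmatrix}a\\-b^*\end{smallmatrix}\right]\big\|$, and these two compressions have the same norm because $a^* = -a$; hence a contractive completion $d_0$ exists, and averaging it with $-d_0^*$ (the unit ball being convex and invariant under $M\mapsto M^*$) produces the skew-Hermitian contractive choice $d = \tfrac12(d_0 - d_0^*)$. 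Alternatively one can exhibit a closed form: writing $\Delta = iH$ and intersecting the two Schur-complement constraints implied by $-I\preceq H\preceq I$ shows the admissible $d$ form a nonempty interval whose midpoint is $d = -b^* a (I - aa^*)^{\dagger} b$, the same structure as the $(2,2)$ block produced by Algorithm 2. Reversing the normalization then yields the required $\Delta = V\left[\begin{smallmatrix}a & b\\-b^* & d\end{smallmatrix}\right]V^*$. The one delicate point left is the degenerate case in which $I - aa^*$ is singular (equivalently $[a\ b]$ is norm-achieving); there I would use the range-constrained version of the Schur complement — which is exactly why a pseudoinverse appears in Algorithm 2 — or pass to the strict problem for $(1-\varepsilon)\Delta$ and let $\varepsilon\downarrow 0$, both defining conditions being closed.
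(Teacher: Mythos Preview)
Your proof is correct. The paper does not actually prove this lemma: its proof environment contains only a pointer to the references of Ebihara and Iwasaki, so there is no in-paper argument to compare against. Your route---unitary reduction of $G$ to $\diag(\Sigma_1,0)$ via the SVD, translation of (i) into $\|[a\ \ b]\|\le 1$ with $a=\Sigma_1^{-1}F_{11}$ skew-Hermitian, and then Parrott's theorem followed by the symmetrization $d=\tfrac12(d_0-d_0^*)$---is clean and complete. The symmetrization step is legitimate because if $M=\left[\begin{smallmatrix}a & b\\-b^* & d_0\end{smallmatrix}\right]$ is a contraction then so is $-M^*=\left[\begin{smallmatrix}a & b\\-b^* & -d_0^*\end{smallmatrix}\right]$ (using $a^*=-a$), and their average is the desired skew-Hermitian contraction. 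Your closed-form alternative $d=-b^*a(I-aa^*)^{\dagger}b$ is exactly the structure appearing in the $(2,2)$ block of Algorithm~2, and the degenerate case is handled correctly either by the range-inclusion form of the Schur complement or by the closedness argument you indicate.
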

\begin{proof}
See \cite{Ebihara:2009uc} or \cite{iwasaki2000generalized}
\end{proof}

The next result is a consequence of Lemma \ref{lemma:iwasaki}.
\begin{lemma}
The following statements are equivalent.\\
(i) $FF^* = GG^*$, and $FG^*+GF^* \succeq 2\cos\theta FF^*$.\\
(ii) There exists a unitary matrix $U$ such that,
$F = GU$ and, $U+U^* \succeq 2\cos\theta I$.
\label{lemma:syou}
\end{lemma}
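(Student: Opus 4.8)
The plan is to prove the two implications separately. The implication $(ii)\Rightarrow(i)$ is immediate: if $F=GU$ with $U$ unitary and $U+U^*\succeq 2\cos\theta\,I$, then $FF^*=GUU^*G^*=GG^*$, and $FG^*+GF^*=G(U+U^*)G^*\succeq 2\cos\theta\,GG^*=2\cos\theta\,FF^*$. The substance is $(i)\Rightarrow(ii)$, which I would reduce to Iwasaki's Lemma~\ref{lemma:iwasaki} through a Cayley transform. Assume first $\theta\in(0,\pi)$; the endpoints are degenerate ($\theta=\pi$ makes the constraint $U+U^*\succeq -2I$ vacuous, so Lemma~\ref{lemma:ra} already suffices, while $\theta=0$ forces $(F-G)(F-G)^*=2FF^*-(FG^*+GF^*)\preceq 0$, hence $F=G$ and $U=I$ works). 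Set
\[
\mu=\frac{1-\cos\theta}{1+\cos\theta}=\tan^2(\theta/2)\ge 0,\qquad P=\sqrt{\mu}\,(F+G),\qquad Q=F-G .
\]

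First I would check that the pair $(Q,P)$ satisfies the hypotheses of Lemma~\ref{lemma:iwasaki}. Expanding $QP^*+PQ^*$ and using $FF^*=GG^*$, the terms $FG^*-GF^*$ cancel and the remainder $2FF^*-2GG^*$ vanishes, so $QP^*+PQ^*=0$. Similarly, using $FF^*=GG^*$ again,
\[
PP^*-QQ^* = (\mu+1)(FG^*+GF^*) - 2(1-\mu)\,FF^* \;\succeq\; \bigl(2(\mu+1)\cos\theta - 2(1-\mu)\bigr)FF^* = 0,
\]
where the inequality uses hypothesis (i), $FG^*+GF^*\succeq 2\cos\theta\,FF^*$, and the last equality is exactly the defining relation for $\mu$. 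Hence $QQ^*\preceq PP^*$, and Lemma~\ref{lemma:iwasaki}, applied with $(Q,P)$ in the roles of $(F,G)$, yields a skew-Hermitian $\Delta$ with $\|\Delta\|\le 1$ and $Q=P\Delta$.

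Finally I would set $U=(I+\sqrt{\mu}\,\Delta)(I-\sqrt{\mu}\,\Delta)^{-1}$. Since $\sqrt{\mu}\,\Delta$ has purely imaginary spectrum, $I-\sqrt{\mu}\,\Delta$ is invertible, and $U$, being the Cayley transform of a skew-Hermitian matrix, is unitary. The identity $Q=P\Delta$ rearranges to $F(I-\sqrt{\mu}\,\Delta)=G(I+\sqrt{\mu}\,\Delta)$, i.e.\ $F=GU$. For the spectral bound, diagonalize the normal matrix $\Delta=W\,\mathrm{diag}(j\delta_k)\,W^*$ with $|\delta_k|\le 1$; then
\[
U = W\,\mathrm{diag}\!\left(\frac{1+j\sqrt{\mu}\,\delta_k}{1-j\sqrt{\mu}\,\delta_k}\right)W^*,
\]
and each eigenvalue equals $e^{j\phi_k}$ with $\tan(\phi_k/2)=\sqrt{\mu}\,\delta_k$, so $|\tan(\phi_k/2)|\le\sqrt{\mu}=\tan(\theta/2)$, whence $|\phi_k|\le\theta$ and $\cos\phi_k\ge\cos\theta$. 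Therefore $U+U^*=W\,\mathrm{diag}(2\cos\phi_k)\,W^*\succeq 2\cos\theta\,I$, which establishes (ii).

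The main obstacle is the semidefinite inequality $QQ^*\preceq PP^*$: this is the only place the hypothesis $FG^*+GF^*\succeq 2\cos\theta\,FF^*$ is used, and the value $\mu=(1-\cos\theta)/(1+\cos\theta)$ is forced by the requirement that the leftover coefficient $2(\mu+1)\cos\theta-2(1-\mu)$ be nonnegative (in fact zero). The accompanying conceptual point — that a unitary $U$ with $U+U^*\succeq 2\cos\theta\,I$ should be parametrized as the Cayley transform of $\sqrt{\mu}$ times a contraction — is what makes the reduction to Lemma~\ref{lemma:iwasaki} work, simultaneously converting the two conditions in (i) into the two hypotheses of that lemma and the spectral bound on $U$ into the norm bound on $\Delta$.
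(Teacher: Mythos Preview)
Your proof is correct and follows essentially the same route as the paper: both reduce $(i)\Rightarrow(ii)$ to Lemma~\ref{lemma:iwasaki} via the substitution $P=\sqrt{\mu}\,(F+G)$, $Q=F-G$ with $\mu=(1-\cos\theta)/(1+\cos\theta)$ (the paper swaps the names $P$ and $Q$), and then recover $U$ as the Cayley transform $(I+\sqrt{\mu}\,\Delta)(I-\sqrt{\mu}\,\Delta)^{-1}$. Your version is in fact slightly more complete, since you explicitly verify $F=GU$ and handle the degenerate endpoints $\theta\in\{0,\pi\}$, which the paper leaves implicit.
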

\begin{proof}
From the direction (ii) to (i) is trivial.
Let us show the direction from (i) to (ii).

Define $\mu = \frac{1-\cos\theta}{1+\cos\theta} < 1$, $P = F-G$, and $Q=\sqrt{\mu}(F+G)$. Then, (i) is equivalent to
\BEAS
PQ^*+QP^* = 0, \qquad PP^* \preceq QQ^*.
\EEAS
From Lemma \ref{lemma:iwasaki}, there exists a matrix $\Delta$ such that
\BEAS
P = Q\Delta, \qquad \|\Delta\| \leq 1, \qquad \Delta + \Delta^* = 0.
\EEAS
Since $\Delta$ is skew-symmetric, we can find a unitary matrix $S$ such that
\BEAS
\Delta = S~\text{diag}{\{j\lambda_i\}}~S^*,
\EEAS
where $j\lambda_i$ is the $i$th eigenvalue of $\Delta$. From the condition $\|\Delta\| \leq 1$, we have $|\lambda_i|\leq 1$.

Now let us define $U = S~\text{diag}\{\frac{1+j\sqrt{\mu}\lambda_i}{1-j\sqrt{\mu}\lambda_i}\}~S^*$.
Notice that $U^* = S~\text{diag}\{\frac{1-j\sqrt{\mu}\lambda_i}{1+j\sqrt{\mu}\lambda_i}\}~S^*$.
Then, it's obvious that $UU^* = I$.
Notice that $$\frac{1+j\sqrt{\mu}\lambda_i}{1-j\sqrt{\mu}\lambda_i} + \frac{1-j\sqrt{\mu}\lambda_i}{1+j\sqrt{\mu}\lambda_i} = 2\frac{1-\mu\lambda_i^2}{1+\mu\lambda_i^2},$$ and since $\lambda_i^2 \leq 1$, we have $$\frac{1-\mu\lambda_i^2}{1+\mu\lambda_i^2} \geq \frac{1-\mu\lambda_i^2}{1+\mu\lambda_i^2}.$$
Therefore,
\BEAS
U+U^* &=& S~\text{diag} \{2\frac{1-\mu\lambda_i^2}{1+\mu\lambda_i^2}\}~S^*\\
	&\succeq& 2\frac{1-\mu}{1+\mu}SS^* = 2\cos\theta I,
\EEAS
which concludes the proof.
\end{proof}
Again, for a special case of Lemma \ref{lemma:syou}, consider $f, g \in\mathbb{C}^{n \times 1}$. We get following immediate consequence.
\begin{corollary}
$ff^* = gg^*$, $fg^*+gf^* \preceq 2\cos\theta_0 ff^*$, if and only if $f = e^{j\theta} g$ for some $\theta \in [-\theta_0,\theta_0]$.
\label{cor:vector-2}
\end{corollary}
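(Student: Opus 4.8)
The plan is to strip the statement down to a fact about a single phase, using the vector form of Rantzer's lemma already recorded as Corollary~\ref{cor:vector-1}. First I would apply that corollary to the hypothesis $ff^* = gg^*$, which immediately yields $f = e^{j\theta}g$ for some real $\theta$, normalized to $\theta \in (-\pi,\pi]$; the degenerate case $f = g = 0$ makes every claim vacuous, so I assume $f \neq 0$ from here on.

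Second, I would substitute $f = e^{j\theta}g$ into the bracket and compute $fg^* + gf^* = e^{j\theta}gg^* + e^{-j\theta}gg^* = 2\cos\theta\, gg^* = 2\cos\theta\, ff^*$. Thus the remaining hypothesis becomes $(\cos\theta - \cos\theta_0)\,ff^* \succeq 0$, and since $ff^*$ is a nonzero positive semidefinite matrix this is equivalent to the scalar inequality $\cos\theta \ge \cos\theta_0$. Third, I would use the elementary fact that, for $\theta_0 \in (0,\pi)$ and $\theta \in (-\pi,\pi]$, one has $\cos\theta \ge \cos\theta_0$ precisely when $\theta \in [-\theta_0,\theta_0]$, because $\cos$ is even and strictly decreasing on $[0,\pi]$. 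Chaining the three equivalences gives exactly that the two hypotheses together are equivalent to ``$f = e^{j\theta}g$ for some $\theta \in [-\theta_0,\theta_0]$''. The same conclusion also drops out of Lemma~\ref{lemma:syou} specialized to $n\times 1$ matrices, where the unitary $U$ is forced to be the scalar $e^{j\theta}$ and the condition $U + U^* \succeq 2\cos\theta_0 I$ is literally $\cos\theta \ge \cos\theta_0$; I would record this remark since the corollary is billed as a special case of that lemma.

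There is essentially no obstacle here --- the corollary is a change of notation away from Lemma~\ref{lemma:syou}. The one point needing care is the orientation of the matrix inequality in the hypothesis: it should read $fg^* + gf^* \succeq 2\cos\theta_0\, ff^*$, consistently with Lemma~\ref{lemma:syou} and with \eqref{eq:lowcov}, because for collinear $f,g$ the bracket $fg^* + gf^*$ equals $2\cos\theta\, ff^*$ \emph{exactly}, so a small $|\theta|$ makes it dominate $2\cos\theta_0\, ff^*$ rather than be dominated by it.
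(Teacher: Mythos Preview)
Your proposal is correct and matches the paper's intended treatment: the paper does not write out a separate proof for this corollary but simply presents it as the $n\times 1$ specialization of Lemma~\ref{lemma:syou}, which is exactly the remark you close with. Your more elementary route via Corollary~\ref{cor:vector-1} is a fine alternative and makes the scalar reduction $\cos\theta \ge \cos\theta_0$ explicit. You are also right to flag the orientation of the inequality: as stated in the corollary it reads $\preceq$, but consistency with Lemma~\ref{lemma:syou} and with \eqref{eq:lowcov} requires $fg^*+gf^* \succeq 2\cos\theta_0\, ff^*$; this is a typo in the paper, not a flaw in your argument.
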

\subsection{Removing Singularity}
\begin{proposition}
Let $A$ be Schur stable. Consider $V \succeq 0$ such that
$$\begin{bmatrix} I & 0 \end{bmatrix} V \begin{bmatrix} I \\ 0 \end{bmatrix} = \begin{bmatrix} A & B \end{bmatrix} V \begin{bmatrix} A^* \\ B^* \end{bmatrix}, \text{ and }~ \textup{Rank}(V) = 1,$$ 
then, $\Tr{\begin{bmatrix} 0 & I \end{bmatrix} V \begin{bmatrix} 0 & I \end{bmatrix}^*} > 0$.
\label{prop:non-singular}
\end{proposition}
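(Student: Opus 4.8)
The plan is to exploit the rank-one hypothesis to reduce the matrix identity to a scalar eigenvalue statement, and then invoke Schur stability. Since $V \succeq 0$ and $\textup{Rank}(V) = 1$, I would first write $V = \begin{bmatrix} x \\ w \end{bmatrix}\begin{bmatrix} x \\ w \end{bmatrix}^*$ for some $x \in \mathbb{C}^n$ and $w \in \mathbb{C}^m$, not both zero. Under this factorization the quantity to be bounded is simply $\Tr{\begin{bmatrix} 0 & I \end{bmatrix} V \begin{bmatrix} 0 & I \end{bmatrix}^*} = w^*w = \|w\|^2$, so the claim is exactly that $w \neq 0$.

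Next I would rewrite the constraint. Substituting the factorization into $\begin{bmatrix} I & 0 \end{bmatrix} V \begin{bmatrix} I \\ 0 \end{bmatrix} = \begin{bmatrix} A & B \end{bmatrix} V \begin{bmatrix} A^* \\ B^* \end{bmatrix}$ gives $xx^* = (Ax+Bw)(Ax+Bw)^*$. Applying Corollary \ref{cor:vector-1} with $f = x$ and $g = Ax+Bw$, there exists a scalar $\theta$ with $x = e^{j\theta}(Ax+Bw)$, equivalently $e^{-j\theta}x = Ax + Bw$.

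Finally I would argue by contradiction: suppose $\|w\|^2 = 0$, i.e.\ $w = 0$. Then the relation above becomes $Ax = e^{-j\theta}x$. If $x \neq 0$, this says $e^{-j\theta}$ is an eigenvalue of $A$ of modulus $1$, contradicting the assumption that $A$ is Schur stable (spectral radius strictly less than $1$). If instead $x = 0$, then together with $w = 0$ we get $V = 0$, contradicting $\textup{Rank}(V) = 1$. Hence $w \neq 0$ and $\Tr{\begin{bmatrix} 0 & I \end{bmatrix} V \begin{bmatrix} 0 & I \end{bmatrix}^*} = \|w\|^2 > 0$.

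There is no substantial obstacle here; the only point requiring care is to close off the degenerate branch $x = 0$ in the contradiction step, which is what forces the use of the rank-one (rather than merely PSD) hypothesis — a purely PSD $V$ could be zero in its first block while nonzero overall, so Schur stability alone would not suffice without the rank constraint pinning $V$ down to a single $(x,w)$ pair.
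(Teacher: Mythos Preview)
Your proposal is correct and follows essentially the same approach as the paper: factor the rank-one $V$ as $\begin{bmatrix} x \\ w \end{bmatrix}\begin{bmatrix} x \\ w \end{bmatrix}^*$, use Corollary~\ref{cor:vector-1} to turn the matrix constraint into $e^{j\theta}x = Ax + Bw$ (up to the sign convention on $\theta$), and then argue by contradiction that $w=0$ forces $x$ to be an eigenvector of $A$ with a unit-modulus eigenvalue, which Schur stability rules out, leaving only $x=0$ and hence $V=0$. The paper compresses your two-branch contradiction into the single observation that $e^{j\theta}I-A$ is invertible, but the content is identical.
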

\begin{proof}
From the decomposition, $V = \begin{bmatrix}x\\w\end{bmatrix}\begin{bmatrix}x\\w\end{bmatrix}^*$, if $\Tr{\begin{bmatrix} 0 & I \end{bmatrix} V \begin{bmatrix} 0 & I \end{bmatrix}^*} = 0$, then $w = 0$.
Moreover from Corollary \ref{cor:vector-1}, there exists $e^{j\theta}$ such that $e^{j\theta}x = Ax + Bw = Ax$, since $w = 0$. 
However, since $A$ is Schur stable, $e^{j\theta}I -A$ is invertible, therefore $x = 0$. This implies $V = 0$ which is contradict to $\text{Rank}(V) = 1$.
\end{proof}

\subsection{Proof of Proposition \ref{prop:strongdual}}
\begin{proof}
We find a basis $\{{v}_i\}$ for $\mathbb{C}^{n+m}$ to construct
\BEAS
V = \sum_{i=1}^{n+m} {v}_i {v}_i^*,
\EEAS
which satisfies the strict inequality constraints in (\ref{eq:dsdp}). Then the Slater's constraint qualification gives strong duality.

Firstly, pick $n+1$ numbers on the unit disk $e^{j\theta_0}, e^{j\theta_1}, \cdots, e^{j\theta_n}$, that are distinct\footnote{$\theta_i$ can be chosen in the specific frequency region to generalize the proof to bounded frequency $H_{\infty}$ analysis case.}.

Since $(A,B)$ is controllable, there exists a matrix $K$ such that the eigenvalues of $A-BK$ are $e^{j\theta_1}, \cdots, e^{j\theta_n}$.
Denote corresponding eigenvector ${x}_i$,
\begin{equation}
(A-BK){x}_i = e^{j\theta_i} {x}_i.
\label{feedback}
\end{equation}
for $i = 1,\cdots,n$. Moreover, let $T = \left[\begin{array}{c c} A-e^{j\theta_0}I & B \end{array}\right]$ is $n$. Then $\rank{T} = n$, because of the Popov-Belevitch-Hautus (PBH) controllability test \cite{Dullerud:2010tc}.
Therefore, there exists a basis $\{{t}_1, \cdots, {t}_m\}$ for $N(T)$, the null space of $T$, and by substitution, we can show that ${t}_i {t}_i^*$ is a feasible point of problem (\ref{eq:dsdp}), for all $i = 1, \cdots, m$.

Define ${v}_{n+k} = {t}_k$, and let $S_1 = $ span$({v}_1,,{v}_n)$, and $S_2 = N(T)$. 
Suppose,
\BEAS
{v} &=& \left[\begin{array}{c} {x} \\ {u} \end{array}\right] \in S_1 \cap S_2.
\EEAS
Since ${v} \in S_1$, there exists $\{\alpha_i\}_{i=1}^n$ such that ${v} = \sum_{i=1}^n \alpha_i {v}_i$,
which implies ${x} = \sum_{i=1}^n \alpha_i{x}_i$, and ${u} = \sum_{i=1}^n \alpha_i{w}_i$.
Furthermore, $e^{j\theta_0}{x} = A{x} + B{u}$, because ${v} \in S_2$. Combining these two equations, we have,
\BEAS
e^{j\theta_0}{x} &=& e^{j\theta_0}  \sum_{i=1}^n \alpha_i {x}_i,\\
A{x} + B{u} &=&  A\sum_{i=1}^n \alpha_i{x}_i + B \sum_{i=1}^n \alpha_i {u}_i\\
					&=& \sum_{i=1}^n \alpha_i e^{j\theta_i}{x}_i,
\EEAS
which implies,
\BEAS
\sum_{i=1}^n \alpha_i e^{j\theta_0} {x}_i &=& \sum_{i=1}^n \alpha_i e^{j\theta_i} {x}_i.
\EEAS
From this, we can conclude that $\alpha_i = 0$, for all $i$, because $\{{x}_i\}_{i=1}^n$ are linearly independent, and $\theta_i \neq \theta_0$ for all $i$.
Therefore, $S_1 \cap S_2 = \{0\}$ which implies $\{{v}_i\}_{i=1}^{n+m}$ are linearly independent.

Now it is easy to check that $V$ is a feasible point of \eqref{eq:dsdp} and that $V \succ 0$. This implies that Slater's constraint qualification holds \cite{boyd2004convex}, and we are done.
\end{proof}

\end{document}